\documentclass[12pt]{amsart}

\usepackage{amsmath,amssymb,amsthm}
\usepackage[all]{xy}
\usepackage[colorlinks=true,linkcolor=blue,citecolor=blue]{hyperref}

\newtheorem{mainthm}{Theorem}            % main theorems, numbered in the introduction\input{fpdim_tensor.tex}

\theoremstyle{plain}
\newtheorem{theorem}{Theorem}[section]
\newtheorem{proposition}[theorem]{Proposition}
\newtheorem{lemma}[theorem]{Lemma}
\newtheorem{corollary}[theorem]{Corollary}

\theoremstyle{definition}
\newtheorem{definition}[theorem]{Definition}
\newtheorem{example}[theorem]{Example}

\theoremstyle{remark}
\newtheorem{remark}[theorem]{Remark}

\newcommand{\kk}{\mathsf{k}}
\newcommand{\FPdim}{\mathsf{FPdim}}
\newcommand{\mmod}{\mathsf{mod}}
\newcommand{\op}{\mathrm{op}}
\DeclareMathOperator{\Hom}{Hom}
\DeclareMathOperator{\Ext}{Ext}
\DeclareMathOperator{\End}{End}
\DeclareMathOperator{\rad}{rad}
\DeclareMathOperator{\soc}{soc}
\DeclareMathOperator{\topp}{top}
\newcommand{\Qext}[1]{Q_{#1}}
\newcommand{\adj}[1]{M(#1)}

\begin{document}

\title[FP dimension and tensor products of algebras]{Frobenius--Perron dimension and tensor products of algebras}
\author{Kengo Miyamoto}
\address{Department of Computer and Information Science, Ibaraki University, Ibaraki, 316-8511, Japan.}
\email{kengo.miyamoto.uz63@vc.ibaraki.ac.jp}
\subjclass[2020]{16E10, 16G10, 16G20, 16G60}
\keywords{Frobenius--Perron dimension, semibrick, tensor product of algebras,
representation type, $\tau$-tilting finite algebra}

\begin{abstract}
In this paper, we study how the Frobenius--Perron dimension of finite-dimensional algebras behaves under tensor products and related constructions.
We prove that Frobenius--Perron dimension is super-additive under tensor products and is additive whenever one tensor factor is local.
In particular every non-negative integer occurs as a Frobenius--Perron dimension.
We further show that the invariant equals $1$ for every representation-infinite cycle-finite algebra whose Auslander--Reiten components with oriented cycles are tubes, such as a tame concealed or tubular algebra, and we determine it on the grids $\kk A_m\otimes\kk A_n$, where it is $0$, $1$, or $\infty$ according to representation type.
Finally we treat skew group algebras of local algebras, for which a McKay quiver computation gives a lower bound and shows that the dimension can jump from finite to infinite.
\end{abstract}

\maketitle

\section*{Acknowledgment}
We would like to thank Prof. Sota Asai for his helpful comments, which made the author aware of an error in a proposition of an earlier version of this paper and led to its correction. The paper would not have been complete without his comments.
This work was supported by JSPS KAKENHI Grant Number 24K16885.

\section{Introduction}

Throughout this paper, $\kk$ is an algebraically closed field, and an algebra means a finite-dimensional $\kk$-algebra, not assumed connected.
Modules are finitely generated right modules.
All tensor products are taken over $\kk$.

The Frobenius--Perron dimension of a $\kk$-linear category was introduced by Chen, Gao, Wicks, Zhang, Zhang and Zhu \cite{CGWZ}, generalizing the Frobenius--Perron dimension of an object of a fusion category.
As a special case, the Frobenius--Perron dimension of an algebra $A$ is
\[
   \FPdim(A)=\sup\{\rho(\Qext{\mathcal S})\mid
     \mathcal S\ \text{a finite semibrick in}\ \mmod A\},
\]
where $\Qext{\mathcal S}$ is the $\Ext$-quiver of the semibrick $\mathcal S$ and $\rho$ denotes the spectral radius of its adjacency matrix.
The invariant is sensitive to representation type. Chen, Gao, Wicks, Zhang, Zhang and Zhu showed that a path algebra $\kk Q$ has $\FPdim(\kk Q)=0,1,\infty$ according as $\kk Q$ is of finite, tame or wild representation type \cite[Theorem~0.3]{CGWZ}.
Recently Adachi and Kase \cite{AK} computed $\FPdim$ for several classes of algebras by means of the $\tau$-tilting theory of Adachi, Iyama and Reiten \cite{AIR}, and proved that $\FPdim(A)\le2$ for a $\tau$-tilting finite algebra of tame representation type.
Chen and Chen \cite{CC,CC2} computed $\FPdim$ for representation-directed algebras and for bound quiver algebras with loops, and Wicks \cite{Wicks} computed it for radical-square-zero bound quiver algebras.

Here we study instead how $\FPdim$ behaves under the standard operations that build new algebras from old ones, the tensor product being the principal one.
Our first result is the following super-additivity.

\begin{mainthm}[Theorem~\ref{thm:superadd}]\label{main:A}
Let $A$ and $B$ be finite-dimensional algebras.
Then we have
\[ \FPdim(A\otimes B)\ge\FPdim(A)+\FPdim(B). \]
\end{mainthm}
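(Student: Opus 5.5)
Given finite semibricks $\mathcal S=\{S_1,\dots,S_p\}$ in $\mmod A$ and $\mathcal T=\{T_1,\dots,T_q\}$ in $\mmod B$, I would build the family of $A\otimes B$-modules
\[
\mathcal S\boxtimes\mathcal T=\{S_i\otimes T_j\mid 1\le i\le p,\ 1\le j\le q\},
\]
and then compare Ext-quivers. Taking suprema over $\mathcal S$ and $\mathcal T$ gives the inequality.

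\textbf{Step 1: semibrick.} The Künneth formula for finite-dimensional modules over $\kk$ gives
\[
\Hom_{A\otimes B}(S_i\otimes T_j,S_k\otimes T_l)\cong\Hom_A(S_i,S_k)\otimes\Hom_B(T_j,T_l).
\]
This space is $\kk$ when $(i,j)=(k,l)$, using $\kk$ algebraically closed, and $0$ otherwise. So $\mathcal S\boxtimes\mathcal T$ is a finite semibrick.

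\textbf{Step 2: Ext-quiver.} Künneth in degree one gives
\[
\Ext^1(S_i\otimes T_j,S_k\otimes T_l)\cong\bigl(\Ext^1_A(S_i,S_k)\otimes\Hom_B(T_j,T_l)\bigr)\oplus\bigl(\Hom_A(S_i,S_k)\otimes\Ext^1_B(T_j,T_l)\bigr).
\]
To justify this, I would tensor projective resolutions $P_\bullet\to S_i$ and $P'_\bullet\to T_j$; the result is a projective resolution of $S_i\otimes T_j$. I would then use $\Hom_{A\otimes B}(P\otimes P',M\otimes N)\cong\Hom_A(P,M)\otimes\Hom_B(P',N)$ for finitely generated projectives, and apply the algebraic Künneth theorem over the field $\kk$. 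By Step 1 the Hom factors are $\delta$'s. Hence the adjacency matrix of $\Qext{\mathcal S\boxtimes\mathcal T}$ is the Kronecker sum
\[
M_{\mathcal S}\otimes I_q+I_p\otimes M_{\mathcal T}.
\]

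\textbf{Step 3: spectral radius.} Both summands are nonnegative matrices and they commute. The eigenvalues of a Kronecker sum are exactly the sums $\lambda+\mu$ with $\lambda$ an eigenvalue of $M_{\mathcal S}$ and $\mu$ one of $M_{\mathcal T}$. By Perron--Frobenius, $\rho(M_{\mathcal S})$ and $\rho(M_{\mathcal T})$ are themselves eigenvalues. Their sum is then a nonnegative eigenvalue of the sum, and it has maximal modulus, since $|\lambda+\mu|\le|\lambda|+|\mu|$. Therefore
\[
\rho(\Qext{\mathcal S\boxtimes\mathcal T})=\rho(\Qext{\mathcal S})+\rho(\Qext{\mathcal T}).
\]
Taking suprema gives $\FPdim(A\otimes B)\ge\FPdim(A)+\FPdim(B)$, including the case where either side is $\infty$.

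The main obstacle is Step 2, the degree-one Künneth isomorphism for $\Ext$ over $A\otimes B$. It needs care with finite generation and the identification of Hom spaces between tensor products of projectives, but it is standard.
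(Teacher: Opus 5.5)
Your proposal is correct and follows the paper's proof essentially step for step. The external product $\mathcal S\boxtimes\mathcal T$ is a semibrick whose $\Ext$-quiver has the Kronecker sum as adjacency matrix, by the K\"unneth formula in degrees $0$ and $1$ (Lemma~\ref{lem:boxtimes}). The commuting-matrices plus Perron--Frobenius argument then gives additivity of spectral radii (Lemma~\ref{lem:kron}), and one takes suprema.

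The one detail the paper makes explicit that you skip is restricting to non-empty semibricks. If $\mathcal S$ or $\mathcal T$ is empty, then $\mathcal S\boxtimes\mathcal T=\varnothing$ and your identity $\rho(\Qext{\mathcal S\boxtimes\mathcal T})=\rho(\Qext{\mathcal S})+\rho(\Qext{\mathcal T})$ fails. This is harmless, since the empty semibrick contributes only $0$ to each supremum.
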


The proof uses only the K\"unneth formula and the Perron--Frobenius theorem. An external product of semibricks is again a semibrick, and its $\Ext$-quiver is the Kronecker sum of the two factor quivers, whose spectral radius is the sum of the two spectral radii.
The bound is attained, and in fact becomes an equality as soon as one factor is local.

\begin{mainthm}[Theorem~\ref{thm:local}]\label{main:B}
Let $A$ be a finite-dimensional algebra and $B$ a local finite-dimensional algebra.
Then we have
\[ \FPdim(A\otimes B)=\FPdim(A)+\FPdim(B). \]
\end{mainthm}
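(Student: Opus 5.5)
The inequality $\FPdim(A\otimes B)\ge\FPdim(A)+\FPdim(B)$ is Theorem~\ref{main:A}, so only the reverse inequality needs proof. I would first compute $\FPdim(B)$ for local $B$, then show that every semibrick over $A\otimes B$ comes from a semibrick over $A$, and finally compare $\Ext$-quivers by the K\"unneth formula.

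\emph{Step 1: $B$ local.} Let $S$ be the unique simple $B$-module and put $m=\dim\Ext^1_B(S,S)$, the number of loops of $B$. Any two nonzero $B$-modules $M,N$ have a nonzero homomorphism $M\to\topp M\to S\hookrightarrow\soc N\subseteq N$. Hence a semibrick in $\mmod B$ contains exactly one brick. If $M$ is a brick, the composite $M\to S\to M$ is a nonzero endomorphism. It is nilpotent unless $M\cong S$, so $M\cong S$. Therefore $\FPdim(B)=\rho(\text{one vertex with } m \text{ loops})=m$.

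\emph{Step 2: reduction of bricks.} Let $J=A\otimes\rad B$. This is a nilpotent ideal with $(A\otimes B)/J\cong A$, so the simple $A\otimes B$-modules are the $T\otimes S$. I want to show that every brick $M$ over $A\otimes B$ satisfies $MJ=0$, so that $M$ is an $A$-module, viewed as $M\otimes S$.

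If $B$ is commutative this is immediate. For $x\in\rad B$, right multiplication by $1\otimes x$ is a nilpotent endomorphism of $M$, hence zero.

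For general local $B$, I would first try the same idea with $\topp$ and $\soc$ relative to $J$. The composite $M\to M/MJ\to\{m\in M: mJ=0\}\hookrightarrow M$ would be a non-invertible nonzero endomorphism whenever $MJ\neq0$, provided there is a nonzero $A$-homomorphism $M/MJ\to\{m: mJ=0\}$. Producing that homomorphism is the step I expect to be the main obstacle. The available tools are the filtration of $M$ by $MJ^i$, whose layers are $A$-modules tensored with $B$-composition factors all isomorphic to $S$, and a comparison of an $A$-top with an $A$-socle via a common composition factor.

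If this cannot be done for arbitrary local $B$, I would look for an alternative that avoids needing $M$ to be an $A$-module. The idea would be to bound $\rho(\Qext{\mathcal S})$ directly, by dominating the $\Ext$-quiver of a semibrick $\mathcal S$ over $A\otimes B$ entrywise by $Q+mI$ for some semibrick quiver $Q$ of $A$.

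\emph{Step 3: $\Ext$-quiver.} Suppose $\mathcal S=\{X_1\otimes S,\dots,X_r\otimes S\}$ with $\{X_i\}$ a semibrick over $A$. The K\"unneth formula gives
\[
\Ext^1_{A\otimes B}(X_i\otimes S,X_j\otimes S)\cong \Ext^1_A(X_i,X_j)\oplus\bigl(\Hom_A(X_i,X_j)\otimes\Ext^1_B(S,S)\bigr).
\]
The second summand has dimension $m\delta_{ij}$, since $\{X_i\}$ is a semibrick. So the adjacency matrix of $\Qext{\mathcal S}$ is $M_A+mI$, where $M_A$ is the adjacency matrix of $\Qext{\{X_i\}}$. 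Therefore
\[
\rho(\Qext{\mathcal S})=\rho(M_A)+m\le\FPdim(A)+\FPdim(B).
\]
Taking the supremum over all finite semibricks $\mathcal S$ yields $\FPdim(A\otimes B)\le\FPdim(A)+\FPdim(B)$, and together with Theorem~\ref{main:A} this gives equality.
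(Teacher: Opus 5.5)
\textbf{There is a gap in Step 2.} Your Steps 1 and 3 are correct and agree with the paper (Lemma~\ref{lem:localbrick} and the K\"unneth computation in the proof of Theorem~\ref{thm:local}). Step~2, however, is left open exactly where the argument needs it. You prove $M(A\otimes\rad B)=0$ for a brick $M$ only when $B$ is commutative. For a general local $B$ you sketch two strategies but carry out neither. Without Step~2 you cannot say that an arbitrary semibrick of $A\otimes B$ has the form $\{X_i\otimes S\}$, so Step~3 does not yet bound $\FPdim(A\otimes B)$ from above.

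Your first strategy will not work as stated. Both $M/MJ$ and $\{m\in M: mJ=0\}$ are arbitrary $A$-modules. In Step~1, $B/\rad B=\kk$ forced top and socle to be sums of one simple, so a nonzero map between them always existed. Here nothing forces a nonzero $A$-homomorphism between the two modules, and a shared composition factor does not produce one. The domination idea comes with no mechanism.

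\textbf{The missing idea.} Your commutative argument only needs a central nilpotent element, and a local algebra always supplies one. Let $B$ have Loewy length $\ell\ge2$.
\begin{itemize}
\item The ideal $\rad^{\ell-1}B$ is central in $B$. Write $b=\alpha+r$ with $\alpha\in\kk$ and $r\in\rad B$. For $z\in\rad^{\ell-1}B$ we get $zb=\alpha z=bz$, because $zr,rz\in\rad^{\ell}B=0$.
\item Hence $1\otimes z$ is central in $A\otimes B$. Right multiplication by it is therefore a nilpotent $A\otimes B$-endomorphism of $M$, and on a brick it must vanish.
\item So $M$ is a module over $A\otimes(B/\rad^{\ell-1}B)$. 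Its endomorphism ring is unchanged, so it is still a brick there.
\item The algebra $B/\rad^{\ell-1}B$ is local of Loewy length $\ell-1$, so induction on $\ell$ gives $M(A\otimes\rad B)=0$.
\end{itemize}

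Right multiplication by $1\otimes z$ is in fact the map $M/MJ\to\{m\in M: mJ=0\}$ you were looking for. It kills $MJ$ and lands in the annihilator of $J$. But it may act as zero on $M$ even when $MJ\neq0$, which is why you must pass to the quotient and induct rather than argue in one step.

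This is the paper's Lemma~\ref{lem:tensorbricks}. With it in place, $\Hom_{A\otimes B}(X\otimes S,Y\otimes S)\simeq\Hom_A(X,Y)$ shows that semibricks of $A\otimes B$ are exactly the sets $\{X_i\otimes S\}$ with $\{X_i\}$ a semibrick of $A$. Your Step~3 then completes the proof.
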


The point is that a local algebra has only its simple module as a brick, so that the bricks of $A\otimes B$ are exactly $N\otimes S_B$ for $N$ a brick of $A$.
We prove the latter by induction on the Loewy length of $B$.
As corollaries, for $m\ge2$ we have
\[ \FPdim(A\otimes\kk[x]/(x^m))=\FPdim(A)+1, \]
while $\kk[x_1,\dots,x_n]/(x_1^2,\dots,x_n^2)$ has Frobenius--Perron dimension $n$.
Thus every non-negative integer occurs.

We further record the elementary behaviour under direct products (Lemma~\ref{lem:product}) and triangular matrix algebras (Section~\ref{subsec:triangular}), and we treat the skew group algebra $A\ast G$ for $\operatorname{char}\kk\nmid|G|$.
Its Frobenius--Perron dimension equals $\FPdim(A)$ for a trivial action and is at least $\FPdim(A)$ for local $A$, the latter bound coming from the McKay quiver of the radical, yet it can be infinite while $\FPdim(A)$ is finite (Example~\ref{ex:skewfail}).
For the tensor products themselves we need two facts on representation type.
A representation-infinite cycle-finite algebra whose Auslander--Reiten components carrying oriented cycles are tubes has $\FPdim=1$ (Proposition~\ref{prop:cyclefinite}); in particular so does every tubular or tame concealed algebra (Theorem~\ref{thm:tubular}).
This is a statement about the module category. Although a tubular algebra is derived equivalent to a tubular weighted projective line $\mathbb X$, for which the Frobenius--Perron dimension of the derived category $D^b(\operatorname{coh}\mathbb X)$ equals $1$ \cite[Theorem~7.10]{CGWZ}, the invariant $\FPdim$ is computed from semibricks of the module category and is not a derived invariant, and Theorem~\ref{thm:tubular} does not follow from that result.
The hypothesis on the components cannot be dropped: a representation-finite self-injective algebra is cycle-finite, yet its $\FPdim$ equals the spectral radius of its Gabriel quiver, which may exceed $1$ (Remark~\ref{rem:cyclefinitesharp}).
A strictly wild algebra has $\FPdim=\infty$, a result of Chen, Gao, Wicks, Zhang, Zhang and Zhu \cite{CGWZ} for which we give a short proof (Proposition~\ref{prop:wild}).
A representation-finite strongly simply connected algebra has $\FPdim=0$ (Proposition~\ref{prop:sc}).
Together with the classification of the representation type of these grids, these results determine $\FPdim$ on $\kk A_m\otimes \kk A_n$ (Theorem~\ref{thm:grid}).
Already the value $1$ shows the inequality of Theorem~\ref{main:A} to be strict for representation-finite factors.

Since bricks, $\Hom$ and $\Ext^1$ are preserved by Morita equivalence, $\FPdim$ is a Morita invariant, and we use this freely.

\section{Preliminaries}

We recall the definition of $\FPdim$ following \cite{CGWZ,AK}.
For semibricks and their role in $\tau$-tilting theory we refer to \cite{Asai}.

A module $S$ is a \emph{brick} if $\End_A(S)$ is a division ring.
As $\kk$ is algebraically closed, this means $\End_A(S)\simeq\kk$.
A set $\mathcal S$ of pairwise non-isomorphic bricks is a \emph{semibrick} if $\Hom_A(S,S')=0$ for all non-isomorphic $S,S'\in\mathcal S$.
The empty set is a semibrick.
For a finite semibrick $\mathcal S$, the \emph{$\Ext$-quiver} $\Qext{\mathcal S}$ has vertex set $\mathcal S$ and exactly $\dim_\kk\Ext_A^1(S,S')$ arrows from $S$ to $S'$, loops included.
For a square matrix $M$, write $\rho(M)$ for its \emph{spectral radius}, the largest modulus of an eigenvalue of $M$.
For a quiver $Q$ with non-empty vertex set, $\adj{Q}$ is its adjacency matrix, and we set $\rho(Q):=\rho(\adj{Q})$.
For the empty quiver, $\rho=0$.
We shall use the following \cite[Lemma~2.3]{AK}.

\begin{lemma}\label{lem:rho}
Let $Q$ be a quiver.
If $Q'$ is a subquiver of $Q$, then $\rho(Q')\le\rho(Q)$.
If $Q=Q^1\sqcup\dots\sqcup Q^\ell$ is the decomposition into connected components, then $\rho(Q)=\max_i\rho(Q^i)$.
\end{lemma}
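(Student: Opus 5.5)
The first claim is monotonicity of the spectral radius for non-negative matrices. The second is the block-diagonal structure of the adjacency matrix of a disconnected quiver.

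For the first claim, let $Q'$ be a subquiver of $Q$; if $Q'$ is empty there is nothing to prove. Otherwise, order the vertices of $Q$ so that those of $Q'$ come first. Then $\adj{Q'}$, padded by zeros to the size of $\adj{Q}$, is an entrywise non-negative matrix bounded entrywise by $\adj{Q}$, because $Q'$ has at most as many arrows between any two of its vertices as $Q$ has. Padding by zeros does not change the spectral radius, since it only adds eigenvalues $0$. It then suffices to show that $0\le M\le N$ entrywise implies $\rho(M)\le\rho(N)$. I would use Gelfand's formula $\rho(M)=\lim_k\|M^k\|^{1/k}$ with the entrywise $\ell^1$ (or max-row-sum) norm. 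From $0\le M\le N$ we get $0\le M^k\le N^k$ entrywise by induction on $k$, hence $\|M^k\|\le\|N^k\|$, and taking $k$-th roots and limits gives the inequality. Alternatively one can cite the Perron--Frobenius comparison theorem directly.

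For the second claim, order the vertices component by component. Since there are no arrows between distinct components, $\adj{Q}$ is block diagonal with blocks $\adj{Q^i}$. Its characteristic polynomial is therefore the product of the characteristic polynomials of the blocks, so the spectrum of $\adj{Q}$ is the union of the spectra of the $\adj{Q^i}$. Taking the maximal modulus gives $\rho(Q)=\max_i\rho(Q^i)$. Note also that each $Q^i$ is a subquiver, so the first claim already gives the inequality $\ge$.

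There is no real obstacle here. The only point needing care is the monotonicity $0\le M\le N\Rightarrow\rho(M)\le\rho(N)$ for non-negative but possibly reducible matrices, and Gelfand's formula handles this cleanly without any irreducibility assumption.
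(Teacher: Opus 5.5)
Your proof is correct, and it cannot be matched against an argument in the paper: the paper does not prove this lemma but quotes it from Adachi--Kase \cite[Lemma~2.3]{AK}.

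Your argument is a complete, self-contained proof of the two standard facts involved.
\begin{itemize}
\item For the first claim, the padded matrix $\mathrm{diag}(\adj{Q'},0)$ is entrywise dominated by $\adj{Q}$ and has the same spectral radius as $\adj{Q'}$. The domination $0\le M^k\le N^k$ passes to any norm that is monotone on non-negative matrices. Gelfand's formula then turns this into $\rho(M)\le\rho(N)$, with no irreducibility hypothesis needed.
\item For the second claim, ordering vertices by component makes $\adj{Q}$ block diagonal. Its spectrum is therefore the union of the spectra of the $\adj{Q^i}$.
\end{itemize}

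The usual alternative, and presumably what lies behind the citation, is to invoke the Perron--Frobenius comparison theorem for non-negative matrices directly. In the reducible case that theorem is itself proved either by perturbing to positive matrices and using continuity of the spectral radius, or by essentially your Gelfand-formula argument. Your route is the more elementary and self-contained of the two.
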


\begin{definition}[\cite{CGWZ,AK}]
The \emph{Frobenius--Perron dimension} of $A$ is
\[
   \FPdim(A)=\sup\{\rho(\Qext{\mathcal S})\mid\mathcal S\ \text{a finite semibrick
   in}\ \mmod A\}\ \in[0,\infty].
\]
\end{definition}

We use the monotonicity under factor algebras.

\begin{lemma}[\protect{\cite[Lemma~2.7]{AK}}]\label{lem:factor}
Let $B$ be a factor algebra of $A$.
Then $\FPdim(B)\le\FPdim(A)$.
\end{lemma}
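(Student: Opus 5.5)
Write $B=A/I$ and let $\pi\colon A\to B$ be the canonical surjection. Restriction of scalars along $\pi$ identifies $\mmod B$ with the full subcategory of $\mmod A$ consisting of modules annihilated by $I$. The plan is to show that every finite semibrick $\mathcal S$ of $\mmod B$ remains a semibrick in $\mmod A$, and that its $\Ext$-quiver over $B$ is a subquiver of its $\Ext$-quiver over $A$. Lemma~\ref{lem:rho} then gives $\rho(\Qext{\mathcal S}^B)\le\rho(\Qext{\mathcal S}^A)\le\FPdim(A)$, and taking the supremum over $\mathcal S$ yields the claim.

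\emph{The semibrick property transfers.} Since the embedding $\mmod B\to\mmod A$ is full and faithful, we have $\Hom_B(M,N)=\Hom_A(M,N)$ for all $B$-modules $M,N$. Hence $\End_A(S)=\End_B(S)\simeq\kk$ for each $S\in\mathcal S$, and the Hom-orthogonality conditions are unchanged.

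\emph{Comparing $\Ext^1$.} This is the only point needing care. I would use the Yoneda description of $\Ext^1$ by short exact sequences. Any extension $0\to N\to E\to M\to 0$ in $\mmod B$ is also an extension in $\mmod A$, so there is a natural map $\Ext^1_B(M,N)\to\Ext^1_A(M,N)$. This map is injective. Indeed, suppose the sequence splits in $\mmod A$, say via an $A$-linear section $M\to E$. This section is automatically $B$-linear, because $\mmod B$ is a full subcategory, so the sequence already splits in $\mmod B$. Injectivity is a statement about classes rather than single sequences, so I would phrase it through the long exact sequence. Apply $\Hom(-,N)$ to a projective presentation of $M$ over $A$ and over $B$, or equivalently use the five-term exact sequence of the change-of-rings spectral sequence:
\[0\to\Ext^1_B(M,N)\to\Ext^1_A(M,N)\to\Hom_B(\operatorname{Tor}_1^A(M,B),N).\]
The leftmost injection is exactly what is needed. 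Alternatively, one checks directly that the Baer sum is preserved and that the kernel is trivial by the splitting argument above.

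Consequently, for all $S,S'\in\mathcal S$,
\[\dim_\kk\Ext^1_B(S,S')\le\dim_\kk\Ext^1_A(S,S').\]
This says $\Qext{\mathcal S}^B$ is a subquiver of $\Qext{\mathcal S}^A$ on the same vertex set. Lemma~\ref{lem:rho} gives $\rho(\Qext{\mathcal S}^B)\le\rho(\Qext{\mathcal S}^A)$, and the right-hand side is at most $\FPdim(A)$ by definition, which completes the argument. I expect the injectivity of $\Ext^1_B\to\Ext^1_A$ to be the main obstacle; the rest is formal.
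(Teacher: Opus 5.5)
Your argument is correct. The paper gives no proof of its own here; it quotes the lemma from Adachi--Kase. What you describe is the standard proof: semibricks of $\mmod B$ stay semibricks in $\mmod A$ by fullness, the comparison map gives $\Ext^1_B(S,S')\hookrightarrow\Ext^1_A(S,S')$, and monotonicity of $\rho$ under subquivers (Lemma~\ref{lem:rho}) finishes the argument.

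Your worry about classes versus sequences is unnecessary. Fullness, applied to the middle-term comparison maps, shows directly that two $B$-extensions equivalent over $A$ are already equivalent over $B$. The $\kk$-linearity you need for the dimension inequality holds because pullbacks and pushouts of $B$-modules are computed the same way in $\mmod A$; your five-term sequence also gives it.
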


We also record the behaviour under direct products.

\begin{lemma}\label{lem:product}
For finite-dimensional algebras $A_1,\dots,A_r$, we have
\[ \FPdim\left(\prod_{i=1}^r A_i\right)=\max_i\{\FPdim(A_i)\}. \]
\end{lemma}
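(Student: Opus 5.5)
Write $A=\prod_{i=1}^r A_i$ with central idempotents $e_1,\dots,e_r$, $e_i$ the unit of $A_i$. The plan is to identify $\mmod A$ with $\bigoplus_i \mmod A_i$ and to show that finite semibricks of $A$ are exactly disjoint unions of finite semibricks of the $A_i$. The $\Ext$-quiver of such a union is then the disjoint union of the factor quivers, and Lemma~\ref{lem:rho} finishes the argument.

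First, every $A$-module decomposes as $M=\bigoplus_i Me_i$, with $Me_i$ an $A_i$-module on which the other factors act by zero. Morphisms respect this decomposition, so
\[ \Hom_A(M,N)=\bigoplus_i\Hom_{A_i}(Me_i,Ne_i). \]
A brick is indecomposable, so it is concentrated in a single factor: $S=Se_i$ for a unique $i$. Its endomorphism ring is the same whether computed over $A$ or over $A_i$, so the bricks of $A$ are precisely the bricks of the various $A_i$, viewed as $A$-modules by inflation. Consequently a finite semibrick $\mathcal S$ of $A$ decomposes as $\mathcal S=\bigsqcup_i\mathcal S_i$, where $\mathcal S_i$ consists of the members supported on $A_i$. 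Each $\mathcal S_i$ is a semibrick of $A_i$. Conversely, any such union is a semibrick of $A$, because $\Hom_A$ vanishes between modules supported on different factors.

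Next I compare the $\Ext$-groups. Since $\mmod A\simeq\prod_i\mmod A_i$ as abelian categories, an extension $0\to S'\to E\to S\to0$ splits into its components $Ee_j$. Hence
\[ \Ext^1_A(S,S')=\bigoplus_j\Ext^1_{A_j}(Se_j,S'e_j), \]
which is $\Ext^1_{A_i}(S,S')$ if $S,S'$ are both supported on $A_i$, and $0$ otherwise. Alternatively, one can use projective resolutions: $e_iA$ is projective, so a projective resolution over $A_i$ is also one over $A$. Either way, $\Qext{\mathcal S}=\bigsqcup_i\Qext{\mathcal S_i}$ as quivers.

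By Lemma~\ref{lem:rho}, $\rho(\Qext{\mathcal S})=\max_i\rho(\Qext{\mathcal S_i})\le\max_i\FPdim(A_i)$, where empty pieces contribute $0$. Taking the supremum over $\mathcal S$ gives $\le$. For $\ge$, each semibrick of $A_i$ is itself a semibrick of $A$ with the same $\Ext$-quiver. Alternatively, $A_i$ is a factor algebra of $A$, so Lemma~\ref{lem:factor} gives $\FPdim(A_i)\le\FPdim(A)$ directly.

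The only step needing care is the $\Ext$ comparison, that is, that extensions between modules on different factors vanish and that $\Ext^1$ computed over $A$ agrees with that over $A_i$. The splitting of $\mmod A$ as a product category handles this cleanly. The cases of infinite values are covered automatically by the supremum.
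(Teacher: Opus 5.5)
Your proposal is correct and follows the same route as the paper. A brick lives on a single factor, so a finite semibrick of $\prod_i A_i$ is a disjoint union of semibricks of the $A_i$, its $\Ext$-quiver is the disjoint union of the factor quivers, and Lemma~\ref{lem:rho} finishes. The only difference is that you verify the $\Ext$ comparison explicitly via $\mmod A\simeq\prod_i\mmod A_i$, which the paper leaves implicit.
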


\begin{proof}
Every indecomposable $\prod_i A_i$-module is supported on a single factor, so a semibrick of $\prod_i A_i$ is a disjoint union of semibricks of the $A_i$, with no arrows between the parts. By Lemma~\ref{lem:rho} its $\Ext$-quiver has spectral radius the maximum of those of the parts, and taking suprema gives the claim.
\end{proof}

Recall the K\"unneth formula for $\Ext$ over a tensor product of algebras. For algebras $A,B$ and modules $X,X'\in\mmod A$, $Y,Y'\in\mmod B$, there is a natural isomorphism
\[ \Ext^n_{A\otimes B}(X\otimes Y,X'\otimes Y')\simeq \bigoplus_{p+q=n}\Ext^p_A(X,X')\otimes\Ext^q_B(Y,Y'), \]
see \cite[Ch.~XI]{CE}.
In degrees $0$ and $1$ this reads
\begin{align}
  \Hom_{A\otimes B}(X\otimes Y,X'\otimes Y')
     &\simeq \Hom_A(X,X')\otimes\Hom_B(Y,Y'),\label{eq:hom}\\
  \Ext^1_{A\otimes B}(X\otimes Y,X'\otimes Y')
     &\simeq \big(\Ext^1_A(X,X')\otimes\Hom_B(Y,Y')\big)\nonumber\\
     &\quad\oplus\big(\Hom_A(X,X')\otimes\Ext^1_B(Y,Y')\big).\label{eq:ext}
\end{align}

We shall also use three known facts.
For a finite-dimensional algebra $A$ and $X,Y\in\mmod A$, the \emph{Auslander--Reiten formula} \cite[Theorem~IV.2.13]{ASS} reads
\[
   \Ext^1_A(X,Y)\simeq D\,\overline{\Hom}_A(Y,\tau X),
\]
where $\tau$ is the Auslander--Reiten translation, $D$ the standard $\kk$-duality, and $\overline{\Hom}_A(Y,\tau X)$ the quotient of $\Hom_A(Y,\tau X)$ by the morphisms factoring through an injective module.

\begin{theorem}[\protect{\cite[Theorem~0.3]{CGWZ}}]\label{thm:cgwz}
For a finite, connected, acyclic quiver $Q$,
\[
   \FPdim(\kk Q)=
   \begin{cases}
     0      & \text{if $\kk Q$ of finite representation type},\\
     1      & \text{if $\kk Q$ of tame representation type},\\
     \infty & \text{if $\kk Q$ of wild representation type}.
   \end{cases}
\]
\end{theorem}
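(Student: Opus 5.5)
For $\kk Q$ hereditary, $\Ext^{\ge2}$ vanishes. So for modules $X,Y$ the Euler form gives $\dim\Hom(X,Y)-\dim\Ext^1(X,Y)=\langle\underline{\dim}X,\underline{\dim}Y\rangle$. For a finite semibrick $\mathcal S=\{S_1,\dots,S_r\}$ with dimension vectors $d_i$ this reads
\[
I-\adj{\Qext{\mathcal S}}=C,\qquad C_{ij}=\langle d_i,d_j\rangle .
\]
The plan is to treat the three representation types separately using this identity.

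\emph{Finite type.} Every indecomposable $\kk Q$-module is preprojective, with $q(\underline{\dim}S)=1$. A brick $S$ then has $\Ext^1(S,S)=0$, so $\Qext{\mathcal S}$ has no loops. For an oriented cycle $S_1\to S_2\to\dots\to S_1$ in $\Qext{\mathcal S}$, the Auslander--Reiten formula turns each arrow into a non-zero map $S_{i+1}\to\tau S_i$. Composing with the mesh path $\tau S_i\to\dots\to S_i$ would give a cycle of non-zero maps between indecomposables in the directed preprojective component, which is impossible. Hence $\Qext{\mathcal S}$ is acyclic, its adjacency matrix is nilpotent, and $\rho=0$.

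\emph{Tame type.} For the lower bound, take a quasi-simple module $S$ in a homogeneous tube. It is a brick with $\dim\Ext^1(S,S)=1$, so $\{S\}$ is a semibrick whose $\Ext$-quiver is a single loop, giving $\rho\ge1$. For the upper bound, write $E=\adj{\Qext{\mathcal S}}$. Symmetrizing the identity gives
\[
2I-(E+E^{T})=\bigl(q(d_i,d_j)\bigr)_{i,j},
\]
the Gram matrix of the symmetric Tits form. This matrix is positive semidefinite in the tame (Euclidean) case. Let $v\ge0$ be a Perron vector of $E$, which exists by Perron--Frobenius. Then
\[
\rho(E)\,|v|^2=v^TEv=\tfrac12\,v^T(E+E^T)v\le|v|^2,
\]
so $\rho(E)\le1$.

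\emph{Wild type.} Here it suffices to find single bricks $X$ with $\dim\Ext^1(X,X)=1-q(\underline{\dim}X)$ arbitrarily large, since $\{X\}$ has $\Ext$-quiver a bouquet of that many loops. I would use Kac's theorem: for connected wild $Q$ the fundamental set contains a root $\alpha$ with $q(\alpha)<0$. By Kac and Schofield, such an imaginary root with $q(\alpha)<0$ is a Schur root, and so is every multiple $m\alpha$. The general representation of dimension $m\alpha$ is therefore a brick $X_m$, and
\[
\dim\Ext^1(X_m,X_m)=1-m^2q(\alpha)\to\infty .
\]
If necessary one can first reduce to a minimal wild full subquiver via Lemma~\ref{lem:factor}, since $\kk Q'$ is a factor algebra of $\kk Q$.

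The main obstacle is this wild step. It requires citing the Schur-root machinery correctly, namely that imaginary roots in the fundamental set, and their multiples when $q<0$, are Schur. If that citation is problematic, I would fall back to minimal wild quivers, which are finite in number up to the relevant reductions, and construct explicit bricks there.
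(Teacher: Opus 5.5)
Your proposal is correct. The paper gives no proof of this statement: it quotes it from \cite{CGWZ} and uses it as a black box. The fair comparison is therefore with the tools the paper develops for other algebras.

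Three of your four steps match those tools closely.
\begin{itemize}
\item Your finite-type step is the splicing argument of Lemma~\ref{lem:cyccomp} combined with representation-directedness, which is the content of Theorem~\ref{thm:cc}.
\item Your tame lower bound is the homogeneous-tube argument at the end of Proposition~\ref{prop:cyclefinite}.
\item Your tame upper bound is exactly the Euler-form and Perron-vector computation of Lemma~\ref{lem:tits}. In the hereditary setting it is cleaner: $\Ext^2$ vanishes automatically and the Tits form is positive semidefinite, so you do not need the non-negativity of the Perron vector, which the paper uses to cope with mere weak non-negativity.
\end{itemize}

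The genuine divergence is the wild case. The paper's route (Proposition~\ref{prop:wild}) goes through strict wildness. A wild hereditary algebra admits an exact fully faithful functor from $\mmod\kk\langle x,y\rangle$ \cite[Proposition~7]{Ariki}. This functor carries the Burnside bricks $S_d$, which have $\dim_\kk\Ext^1=1+d^2$, to bricks with at least as many self-extensions.

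You instead stay inside root-system theory.
\begin{itemize}
\item For a connected non-Dynkin, non-Euclidean graph, Vinberg's trichotomy gives a sincere $\alpha$ with $(\alpha,e_i)<0$ for all $i$, where $(-,-)$ is the symmetric Tits form. So your optional reduction to a minimal wild subquiver is unnecessary.
\item This $\alpha$ and all its multiples $m\alpha$ lie in the fundamental set, with $q(m\alpha)=m^2q(\alpha)<0$.
\item Kac's theorem that non-isotropic elements of the fundamental set are Schur roots then gives bricks with $1-m^2q(\alpha)$ loops.
\end{itemize}

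Your version gives explicit dimension vectors and reuses the Euler-form identity from your other cases. Its cost is reliance on the Schur-root result, which is standard and which you state correctly. The paper's version is elementary once the embedding is granted, and it applies verbatim to every strictly wild algebra. That generality is what the paper later needs for the wild grids (via Theorem~\ref{thm:bps}) and for Example~\ref{ex:skewfail}.
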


\begin{theorem}[\protect{\cite{CC}}]\label{thm:cc}
If $A$ is \emph{representation-directed}, that is, $\mmod A$ admits no sequence $M_0\to M_1\to\dots\to M_t=M_0$ of non-zero non-isomorphisms between indecomposable modules with $t\ge1$, then the equality $\FPdim(A)=0$ holds.
\end{theorem}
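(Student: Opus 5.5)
The plan is to show that for every finite semibrick $\mathcal S$ in $\mmod A$ the $\Ext$-quiver $\Qext{\mathcal S}$ has no oriented cycles, loops included. Then $\adj{\Qext{\mathcal S}}$ can be put in strictly upper triangular form by ordering the vertices along the acyclic quiver. It is therefore nilpotent, so $\rho(\Qext{\mathcal S})=0$, and taking the supremum gives $\FPdim(A)=0$.

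The key step is to convert an arrow of $\Qext{\mathcal S}$ into a path in $\mmod A$. Suppose $\Ext^1_A(S,S')\neq0$ for bricks $S,S'\in\mathcal S$, possibly with $S=S'$.
\begin{itemize}
\item $S$ is not projective.
\item By the Auslander--Reiten formula, $D\,\overline{\Hom}_A(S',\tau S)\neq 0$, so there is a nonzero morphism $f\colon S'\to\tau S$ between indecomposable modules.
\item Take the almost split sequence $0\to\tau S\to E\to S\to 0$ and choose an indecomposable summand $E_1$ of $E$. This gives irreducible maps $\tau S\to E_1\to S$.
\item Irreducible maps are nonzero non-isomorphisms.
\end{itemize}
So each arrow $S\to S'$ of $\Qext{\mathcal S}$ yields a chain
\[ S'\xrightarrow{f}\tau S\to E_1\to S \]
of nonzero maps between indecomposables. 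All of them are non-isomorphisms except possibly $f$, and only in the case $S'\simeq\tau S$. In that case we simply drop $f$ from the chain, so the chain is never empty.

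Now suppose $\Qext{\mathcal S}$ has an oriented cycle $S_0\to S_1\to\dots\to S_r=S_0$ with $r\ge1$; a loop is the case $r=1$. Each arrow $S_{i}\to S_{i+1}$ gives a chain as above, running from $S_{i+1}$ back to $S_i$. Concatenating these chains in reverse order produces a sequence
\[ S_0=M_0\to M_1\to\dots\to M_t=S_0 \]
of nonzero non-isomorphisms between indecomposable modules with $t\ge1$. Here $t\ge1$ because each chain contains at least the two irreducible maps. This contradicts representation-directedness. Note that the definition of representation-directed only requires each individual map to be a nonzero non-isomorphism, not that the composites be nonzero, so no control of composites is needed.

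The only delicate point is the possibility $S'\simeq\tau S$, handled by omitting $f$ as above. Otherwise the argument is routine, and the combinatorial step (acyclic quiver implies spectral radius $0$) is standard.
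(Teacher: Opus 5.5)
Your proof is correct. The paper does not prove this statement itself; it is quoted from \cite{CC}. Your argument is, however, exactly the splicing device the paper uses in the proof of Lemma~\ref{lem:cyccomp}. There an arrow $S\to S'$ of $\Qext{\mathcal S}$ yields, via the Auslander--Reiten formula and the almost split sequence ending at $S$, a chain $S'\to\tau S\to E_1\to S$ of non-zero non-isomorphisms, and going around an oriented cycle of $\Qext{\mathcal S}$ produces a cycle in $\mmod A$. For a representation-directed algebra this is an immediate contradiction, so every $\Ext$-quiver is acyclic and its adjacency matrix is nilpotent.

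One small point to tidy is the case where $f$ is an isomorphism. ``Dropping'' $f$ leaves the chain starting at $\tau S$ rather than $S'$, so the pieces no longer match up literally when you concatenate. Instead, compose $f$ into the irreducible map $\tau S\to E_1$; the composite $S'\to E_1$ is again irreducible, hence a non-zero non-isomorphism. This is what the paper means by the isomorphism being ``absorbed into the adjacent irreducible map.''

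Your argument also never uses the semibrick hypothesis. It shows that the $\Ext$-quiver of any finite set of indecomposable modules over a representation-directed algebra is acyclic.
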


\section{The tensor product}

\subsection{Super-additivity}
We begin with the external product of semibricks.
Let $\mathcal S$ and $\mathcal T$ be finite semibricks in $\mmod A$ and $\mmod B$, say $\mathcal S=\{S_1,\dots,S_p\}$ and $\mathcal T=\{T_1,\dots,T_q\}$, and write $\mathcal S\boxtimes\mathcal T=\{S_i\otimes T_j\mid 1\leq i\leq p,~1\leq j\leq q\}$.
Recall that the \emph{Kronecker product} of matrices $M=(M_{ii'})$ and $N=(N_{jj'})$ is the matrix $M\otimes N$ with rows and columns indexed by pairs, whose $\big((i,j),(i',j')\big)$-entry is $M_{ii'}N_{jj'}$, and that the \emph{Kronecker sum} of square matrices $M$ (size $p$) and $N$ (size $q$) is $M\oplus_{\mathrm{Kr}}N:=M\otimes I_q+I_p\otimes N$.

\begin{lemma}\label{lem:boxtimes}
$\mathcal S\boxtimes\mathcal T$ is a finite semibrick in $\mmod(A\otimes B)$, and
\[ \adj{\Qext{\mathcal S\boxtimes\mathcal T}} =\adj{\Qext{\mathcal S}}\oplus_{\mathrm{Kr}}\adj{\Qext{\mathcal T}}. \]
\end{lemma}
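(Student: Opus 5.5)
The plan is to read off everything from the Künneth isomorphisms \eqref{eq:hom} and \eqref{eq:ext}, using that each $S_i$ and $T_j$ is a brick with $\End\simeq\kk$ (as $\kk$ is algebraically closed). Before that, we check that each $S_i\otimes T_j$ is a nonzero finite-dimensional $A\otimes B$-module, which is clear since $S_i,T_j\ne0$.

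For the semibrick property, we apply \eqref{eq:hom}:
\[
\Hom_{A\otimes B}(S_i\otimes T_j,S_{i'}\otimes T_{j'})\simeq\Hom_A(S_i,S_{i'})\otimes\Hom_B(T_j,T_{j'}).
\]
The right-hand side is $\delta_{ii'}\kk\otimes\delta_{jj'}\kk$, so it vanishes unless $(i,j)=(i',j')$ and is $\kk$ when the pairs agree. Thus each $S_i\otimes T_j$ has endomorphism ring $\kk$, which is a division ring, so it is a brick. Distinct pairs have zero Hom, which in particular shows the $pq$ modules are pairwise non-isomorphic. Hence $\mathcal S\boxtimes\mathcal T$ is a finite semibrick with vertices indexed by pairs $(i,j)$.

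For the adjacency matrix, we take dimensions in \eqref{eq:ext}:
\[
\dim\Ext^1(S_i\otimes T_j,S_{i'}\otimes T_{j'})=\dim\Ext^1_A(S_i,S_{i'})\,\delta_{jj'}+\delta_{ii'}\,\dim\Ext^1_B(T_j,T_{j'}).
\]
Writing $M=\adj{\Qext{\mathcal S}}$ and $N=\adj{\Qext{\mathcal T}}$, this number is $M_{ii'}(I_q)_{jj'}+(I_p)_{ii'}N_{jj'}$. By the definition of the Kronecker product, this is exactly the $((i,j),(i',j'))$-entry of $M\otimes I_q+I_p\otimes N$. Here we index the vertices of $\Qext{\mathcal S\boxtimes\mathcal T}$ by pairs, compatibly with the Kronecker indexing.

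The only point needing care is the applicability of Künneth in degree $1$ for arbitrary finite-dimensional modules. Over a field this holds: take projective resolutions $P_\bullet\to X$ and $Q_\bullet\to Y$. Then $P_\bullet\otimes Q_\bullet$ is a projective resolution of $X\otimes Y$ over $A\otimes B$, since $\kk$ is a field. Moreover $\Hom_{A\otimes B}(P_m\otimes Q_n,X'\otimes Y')\simeq\Hom_A(P_m,X')\otimes\Hom_B(Q_n,Y')$, because all modules are finite-dimensional and the $P_m,Q_n$ can be taken finitely generated. The algebraic Künneth theorem over $\kk$ then finishes this step. We cite \cite{CE} for it, so no real obstacle remains. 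The edge cases where $\mathcal S$ or $\mathcal T$ is empty are trivial, since then the product is empty and the matrix identity is vacuous.
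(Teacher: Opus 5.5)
Your proposal is correct and follows the paper's proof essentially step for step: \eqref{eq:hom} together with $\Hom_A(S_i,S_{i'})=\delta_{ii'}\kk$ and $\Hom_B(T_j,T_{j'})=\delta_{jj'}\kk$ gives the brick and orthogonality conditions, and \eqref{eq:ext} gives the $\big((i,j),(i',j')\big)$-entries of $\adj{\Qext{\mathcal S}}\otimes I_q+I_p\otimes\adj{\Qext{\mathcal T}}$. Your added sketch of the degree-one K\"unneth formula is a sound justification of what the paper simply cites from \cite{CE}: you tensor finitely generated projective resolutions over the field $\kk$ and use $\Hom_{A\otimes B}(P\otimes Q,X'\otimes Y')\simeq\Hom_A(P,X')\otimes\Hom_B(Q,Y')$ for finitely generated projectives.
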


\begin{proof}
By \eqref{eq:hom}, $\End_{A\otimes B}(S_i\otimes T_j)\simeq \End_A(S_i)\otimes\End_B(T_j)\simeq\kk$, so each $S_i\otimes T_j$ is a brick, and $\Hom_{A\otimes B}(S_i\otimes T_j,S_{i'}\otimes T_{j'})\simeq \Hom_A(S_i,S_{i'})\otimes\Hom_B(T_j,T_{j'})$ is $\kk$ if $(i,j)=(i',j')$ and $0$ otherwise.
Hence $\mathcal S\boxtimes\mathcal T$ is a semibrick.
By \eqref{eq:ext} and the same $\Hom$-computation, the number of arrows from $S_i\otimes T_j$ to $S_{i'}\otimes T_{j'}$ is
\[
  \dim_\kk\Ext^1_A(S_i,S_{i'})\,\delta_{jj'}+\delta_{ii'}\,\dim_\kk\Ext^1_B(T_j,T_{j'}),
\]
which is the $\big((i,j),(i',j')\big)$-entry of $\adj{\Qext{\mathcal S}}\otimes I_q+I_p\otimes\adj{\Qext{\mathcal T}}$.
\end{proof}

\begin{lemma}\label{lem:kron}
Let $M$ be  a square matrix of size $p$ over $\mathbb{R}_{\geq 0}$ and $N$ a square matrix of size $q$ over $\mathbb{R}_{\geq 0}$
Then $\rho(M\oplus_{\mathrm{Kr}}N)=\rho(M)+\rho(N)$.
\end{lemma}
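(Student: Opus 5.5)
The plan is to use the spectral mapping property of the Kronecker sum and then a Perron--Frobenius argument to pick out the correct eigenvalue. First I would show that the eigenvalues of $M\oplus_{\mathrm{Kr}}N$, with multiplicity, are exactly the sums $\lambda_i+\mu_j$, where $\lambda_1,\dots,\lambda_p$ are the eigenvalues of $M$ and $\mu_1,\dots,\mu_q$ those of $N$. For this, triangularize over $\mathbb C$: choose invertible $U,V$ with $U^{-1}MU=T_M$ and $V^{-1}NV=T_N$ upper triangular. By the mixed-product rule,
\[
(U\otimes V)^{-1}\,(M\otimes I_q+I_p\otimes N)\,(U\otimes V)=T_M\otimes I_q+I_p\otimes T_N .
\]
The right-hand side is upper triangular in the lexicographic order on pairs, and its diagonal entries are $\lambda_i+\mu_j$.

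From this, the upper bound is immediate: every eigenvalue satisfies
\[
|\lambda_i+\mu_j|\le|\lambda_i|+|\mu_j|\le\rho(M)+\rho(N),
\]
so $\rho(M\oplus_{\mathrm{Kr}}N)\le\rho(M)+\rho(N)$.

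For the lower bound I would invoke the Perron--Frobenius theorem for nonnegative matrices (the weak form, with no irreducibility assumption). It says that $\rho(M)$ is itself an eigenvalue of $M$, and likewise $\rho(N)$ is an eigenvalue of $N$. Hence $\rho(M)+\rho(N)$ is an eigenvalue of the Kronecker sum, which gives $\rho(M\oplus_{\mathrm{Kr}}N)\ge\rho(M)+\rho(N)$. The degenerate cases $p=0$ or $q=0$, where the empty quiver has $\rho=0$, are handled by the convention and are trivial.

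The main obstacle is this last step. Nonnegativity is exactly what is needed: without it, the eigenvalues of maximal modulus of $M$ and $N$ could point in different directions in $\mathbb C$ and partially cancel. I would cite the weak Perron--Frobenius statement directly. Alternatively, it follows by approximating $M$ by the positive matrices $M+\varepsilon J$, where $J$ is the all-ones matrix, and using continuity of the spectrum.

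A more elementary alternative for the lower bound avoids eigenvalue bookkeeping. Take nonnegative Perron vectors $Mu=\rho(M)u$ and $Nv=\rho(N)v$ with $u,v\neq0$. Then
\[
(M\oplus_{\mathrm{Kr}}N)(u\otimes v)=(\rho(M)+\rho(N))(u\otimes v),
\]
and $u\otimes v\neq0$. I would present this version, since it makes clear that only the existence of nonnegative eigenvectors is used.
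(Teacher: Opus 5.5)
Your proof is correct and follows essentially the same route as the paper: the eigenvalues of the Kronecker sum are the sums $\lambda+\mu$ (your conjugation by $U\otimes V$ makes the pairing explicit, where the paper only invokes simultaneous triangularization of the commuting matrices $M\otimes I_q$ and $I_p\otimes N$), the triangle inequality gives the upper bound, and Perron--Frobenius makes $\rho(M)+\rho(N)$ an eigenvalue. Your eigenvector variant $(M\oplus_{\mathrm{Kr}}N)(u\otimes v)=(\rho(M)+\rho(N))(u\otimes v)$ is a valid and slightly more direct way to obtain the lower bound.
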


\begin{proof}
The matrices $M\otimes I_q$ and $I_p\otimes N$ commute and are therefore simultaneously triangularizable over $\mathbb C$, the eigenvalues of their sum being exactly the sums $\lambda+\mu$ of an eigenvalue $\lambda$ of $M$ and an eigenvalue $\mu$ of $N$.
By the Perron--Frobenius theorem $\rho(M)$ and $\rho(N)$ are eigenvalues of $M$ and $N$, and $\rho(M)+\rho(N)$ is an eigenvalue of the sum.
Conversely, $|\lambda+\mu|\le\rho(M)+\rho(N)$ for all $\lambda,\mu$.
\end{proof}

\begin{theorem}\label{thm:superadd}
For all finite-dimensional algebras $A,B$, the inequality
\[ \FPdim(A\otimes B)\ge \FPdim(A)+\FPdim(B) \]
holds.
\end{theorem}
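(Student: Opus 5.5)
The plan is to combine Lemma~\ref{lem:boxtimes} with Lemma~\ref{lem:kron} and then take suprema. Given finite semibricks $\mathcal S$ in $\mmod A$ and $\mathcal T$ in $\mmod B$, Lemma~\ref{lem:boxtimes} says that $\mathcal S\boxtimes\mathcal T$ is a finite semibrick in $\mmod(A\otimes B)$. Its adjacency matrix is the Kronecker sum of the two $\Ext$-quiver adjacency matrices. Both matrices have entries in $\mathbb{Z}_{\ge0}$, so Lemma~\ref{lem:kron} applies and gives
\[
\FPdim(A\otimes B)\ \ge\ \rho(\Qext{\mathcal S\boxtimes\mathcal T})=\rho(\Qext{\mathcal S})+\rho(\Qext{\mathcal T}).
\]

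The one point needing care is the empty semibrick. If $\mathcal T=\emptyset$, then $\mathcal S\boxtimes\mathcal T$ is empty and the displayed identity fails when $\rho(\Qext{\mathcal S})>0$. I would avoid this by never using the empty set. Every algebra $A\neq 0$ has a simple module, and a single simple module $\{S\}$ is a nonempty semibrick with $\rho\ge0$. So both suprema can be taken over nonempty finite semibricks without changing their values, because the empty semibrick only contributes $\rho=0$.

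If $A$ or $B$ is the zero algebra, then $A\otimes B=0$ and all three values are $0$, so I would set this degenerate case aside.

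Next I take suprema. For arbitrary nonempty $\mathcal S$ and $\mathcal T$,
\[
\FPdim(A\otimes B)\ge\rho(\Qext{\mathcal S})+\rho(\Qext{\mathcal T}).
\]
Taking the supremum first over $\mathcal S$ and then over $\mathcal T$ gives $\FPdim(A\otimes B)\ge\FPdim(A)+\FPdim(B)$ in $[0,\infty]$. The infinite case is automatic: if either supremum is $\infty$, the left-hand side is unbounded, since the other term is at least $0$.

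No step is a real obstacle, since the substance is already contained in Lemmas~\ref{lem:boxtimes} and \ref{lem:kron}; the only bookkeeping is the empty-set and zero-algebra convention above.
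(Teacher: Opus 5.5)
Your proposal is correct and is essentially the paper's proof: restrict to non-empty finite semibricks (the empty one contributes only $0$), apply Lemma~\ref{lem:boxtimes} and Lemma~\ref{lem:kron} to $\mathcal S\boxtimes\mathcal T$, and take suprema independently in $[0,\infty]$. One correction to your aside on the zero algebra: if $A=0$, then $\FPdim(A\otimes B)=0$ but $\FPdim(A)+\FPdim(B)=\FPdim(B)$, which can be positive (e.g.\ $B=\kk[x]/(x^2)$), so the statement actually fails there and the zero algebra must be excluded by convention, as the paper tacitly does, rather than being a case where ``all three values are $0$''.
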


\begin{proof}
Let $\mathcal S,\mathcal T$ be finite non-empty semibricks in $\mmod A$, $\mmod B$ (the empty semibrick contributes $0$).
It follows from Lemmas~\ref{lem:boxtimes} and~\ref{lem:kron} that
\[ \rho(\Qext{\mathcal S\boxtimes\mathcal T}) =\rho(\Qext{\mathcal S})+\rho(\Qext{\mathcal T})\le\FPdim(A\otimes B). \]
Taking the supremum over $\mathcal S$ and $\mathcal T$ independently gives the assertion, read in $[0,\infty]$. If either factor has infinite Frobenius--Perron dimension, then so does $A\otimes B$.
\end{proof}

\begin{corollary}\label{cor:powers}
$\FPdim(A^{\otimes n})\ge n\,\FPdim(A)$ for $n\ge1$.
In particular, if $\FPdim(A)>0$ then $\FPdim(A^{\otimes n})\to\infty$ as $n\to\infty$.
\end{corollary}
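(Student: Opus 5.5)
The plan is induction on $n$, using Theorem~\ref{thm:superadd} at each step. For $n=1$ the inequality $\FPdim(A)\ge\FPdim(A)$ is trivial. For the inductive step, write $A^{\otimes(n+1)}=A^{\otimes n}\otimes A$. This identification is valid because the tensor product over $\kk$ is associative up to canonical algebra isomorphism, and $\FPdim$ depends only on the isomorphism class of the algebra. Applying Theorem~\ref{thm:superadd} with the factors $A^{\otimes n}$ and $A$ gives
\[
\FPdim(A^{\otimes(n+1)})\ge\FPdim(A^{\otimes n})+\FPdim(A)\ge n\,\FPdim(A)+\FPdim(A)=(n+1)\FPdim(A).
\]
Every inequality here is read in $[0,\infty]$, with the usual conventions $\infty+x=\infty$ and $n\cdot\infty=\infty$. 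This completes the induction.

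The second assertion follows directly. If $0<\FPdim(A)<\infty$, then $n\,\FPdim(A)\to\infty$, and so $\FPdim(A^{\otimes n})\to\infty$. If $\FPdim(A)=\infty$, the first assertion already gives $\FPdim(A^{\otimes n})=\infty$ for every $n\ge1$.

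There is no real obstacle here. The only points requiring care are the bookkeeping in $[0,\infty]$, and the fact that $A^{\otimes n}$ is again a finite-dimensional $\kk$-algebra, so that Theorem~\ref{thm:superadd} applies to it.
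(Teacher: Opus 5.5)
Your proposal is correct and matches the paper's approach: the corollary is stated right after Theorem~\ref{thm:superadd} as an immediate consequence, namely induction on $n$ via $A^{\otimes(n+1)}\simeq A^{\otimes n}\otimes A$ and super-additivity, read in $[0,\infty]$. Your handling of the case $\FPdim(A)=\infty$ and of the limit statement is also fine.
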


\subsection{A local tensor factor}
We now determine the effect of a local tensor factor.
The key point is the elementary fact that a local algebra has only its simple module as a brick.

\begin{lemma}\label{lem:localbrick}
Let $A$ be a local finite-dimensional algebra with simple module $S$.
Then $S$ is the only brick in $\mmod A$, the only semibricks are $\varnothing$ and $\{S\}$, and
\[
   \FPdim(A)=\dim_\kk\Ext^1_A(S,S)=\dim_\kk(\rad A/\rad^2A).
\]
\end{lemma}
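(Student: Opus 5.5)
The plan is to show first that every brick over a local algebra is simple, and then read off the semibricks and the spectral radius directly.

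Let $M$ be a nonzero $A$-module. Since $A$ is local, its unique simple module is $S$ and $\rad A$ is nilpotent, so $\topp M=M/M\rad A\neq 0$ and $\soc M\neq 0$ are both direct sums of copies of $S$. Composing the projection $M\to\topp M\to S$ with an inclusion $S\hookrightarrow\soc M\subseteq M$ gives a nonzero endomorphism $f$ of $M$. Its image is isomorphic to $S$, so $f$ is not an isomorphism unless $M\simeq S$. If $M$ is a brick, then $\End_A(M)\simeq\kk$, so every nonzero endomorphism is invertible; hence $M\simeq S$. Conversely $\End_A(S)\simeq\kk$ by Schur's lemma, since $\kk$ is algebraically closed. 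So $S$ is the only brick.

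A semibrick consists of pairwise non-isomorphic bricks, so it has at most one element. This leaves only $\varnothing$ and $\{S\}$. The $\Ext$-quiver of $\{S\}$ has one vertex with $d=\dim_\kk\Ext^1_A(S,S)$ loops. Its adjacency matrix is the $1\times1$ matrix $(d)$, so its spectral radius is $d$, while the empty semibrick contributes $0$. Therefore $\FPdim(A)=d$.

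It remains to identify $d=\dim_\kk(\rad A/\rad^2A)$, using the standard computation of $\Ext^1$ between simples via a minimal projective presentation. The projective cover of $S$ is $A\to S$, with kernel $\rad A$. Applying $\Hom_A(-,S)$ to $0\to\rad A\to A\to S\to0$, the restriction map $\Hom_A(A,S)\to\Hom_A(\rad A,S)$ is zero, because any map $A\to S$ kills $\rad A$. Hence
\[
\Ext^1_A(S,S)\simeq\Hom_A(\rad A,S)\simeq\Hom_A(\rad A/\rad^2A,S).
\]
Now $\rad A/\rad^2A$ is semisimple, hence isomorphic to $S^{\oplus r}$ with $r=\dim_\kk(\rad A/\rad^2A)$, since $S$ is one-dimensional (it equals $A/\rad A\simeq\kk$, as $\kk$ is algebraically closed). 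So $d=r$.

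None of these steps is a real obstacle; the only point needing care is the first one, namely producing a nonzero non-invertible endomorphism of a non-simple module through its top and socle.
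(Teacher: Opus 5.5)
Your proposal is correct and follows essentially the paper's route: a non-zero endomorphism factoring through $\topp M\to S\hookrightarrow\soc M$ cannot be invertible unless $M\simeq S$, so every brick is simple. Your explicit derivation of $\Ext^1_A(S,S)\simeq\Hom_A(\rad A/\rad^2A,S)$ from the projective cover $A\to S$ fills in a step the paper simply asserts.
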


\begin{proof}
As a brick is indecomposable, it suffices to show $\End_A(M)\neq\kk$ for every non-simple indecomposable $M$ (if $A=\kk$ there is no such $M$).
As $A$ is local, $\topp M\simeq S^a$ and $\soc M\simeq S^b$ with $a,b\ge1$.
Choose a non-zero $\kk$-linear map $\psi\colon\topp M\to\soc M$.
Since $A/\rad A\simeq\kk$, the simple $S$ is one-dimensional and $A$ acts on $\topp M$ and $\soc M$ by scalars, whence $\psi$ is automatically $A$-linear.
The composite
\[ \varphi\colon M\twoheadrightarrow\topp M\xrightarrow{\psi}\soc M\hookrightarrow M \]
is a non-zero $A$-endomorphism, and it is not a scalar, since $\varphi(M)\subseteq\soc M$ and $M=\varphi(M)$ would force $M=S$.
Hence $M$ is not a brick.
Thus $\Qext{\{S\}}$ is a single vertex with $\dim_\kk\Ext^1_A(S,S) =\dim_\kk(\rad A/\rad^2A)$ loops.
\end{proof}

\begin{remark}\label{rem:CC}
In particular a local algebra is brick-finite.
When the loops at the vertex commute, the value $\FPdim(A)=\dim_\kk(\rad A/\rad^2A)$ also follows from the loop calculus of Chen and Chen \cite{CC2}.
The present argument needs no such hypothesis.
Representation-infinite local algebras are covered as well. For instance, $\kk[x,y]/(x^2,y^2)$, the Klein four-group algebra in characteristic two, is of tame representation type, yet being local it has $S$ as its only brick, so $\FPdim(\kk[x,y]/(x^2,y^2))=2$.
\end{remark}

\begin{lemma}\label{lem:tensorbricks}
Let $A$ be a finite-dimensional algebra and $B$ a local finite-dimensional algebra with simple module $S_B$.
Then the bricks of $A\otimes B$ are exactly $N\otimes S_B$ for $N$ a brick of $A$.
\end{lemma}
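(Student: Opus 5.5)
We prove the two inclusions separately, the harder one by induction on the Loewy length $\ell$ of $B$, as announced in the introduction.

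\emph{Each $N\otimes S_B$ is a brick.} Let $N$ be a brick of $A$. By \eqref{eq:hom},
\[
\End_{A\otimes B}(N\otimes S_B)\simeq\End_A(N)\otimes\End_B(S_B)\simeq\kk ,
\]
so $N\otimes S_B$ is a brick.

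\emph{Every brick has this form.} Let $M$ be a brick of $A\otimes B$ and put $R=\rad B$, a nilpotent ideal with $B/R\simeq\kk$. Consider the submodule $MR$, where $R$ acts through $1\otimes R\subseteq A\otimes B$. Since $A\otimes 1$ commutes with $1\otimes B$, both $MR$ and $\operatorname{ann}_M(R)=\{m\in M\mid mR=0\}$ are $A\otimes B$-submodules. Moreover $M/MR$ and $\operatorname{ann}_M(R)$ are modules over $A\otimes(B/R)\simeq A$.

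We claim $MR=0$, and we prove it by imitating Lemma~\ref{lem:localbrick}. Suppose $MR\neq0$. Since $R$ is nilpotent, $MR\neq M$ and $\operatorname{ann}_M(R)\neq0$. We would like a non-zero $A\otimes B$-linear map
\[
\psi\colon M/MR\to\operatorname{ann}_M(R).
\]
Any $A$-linear map between these two modules is automatically $A\otimes B$-linear, because $R$ acts by zero on both and $1\otimes B$ acts through $\kk$. The composite $\varphi\colon M\twoheadrightarrow M/MR\xrightarrow{\psi}\operatorname{ann}_M(R)\hookrightarrow M$ is then a non-zero endomorphism. It is not a scalar: a non-zero scalar is bijective, but $\varphi(M)\subseteq\operatorname{ann}_M(R)\neq M$, since $MR\neq0$. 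This would contradict $M$ being a brick.

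\emph{The main obstacle} is producing a non-zero $A$-linear $\psi$. Unlike the local case, $M/MR$ and $\operatorname{ann}_M(R)$ need not share a composition factor over $A$. To get around this we use the $B$-action more cleverly. Let $j\ge1$ be maximal with $MR^{j}\neq0$. Choose $r\in R$ with $MR^{j-1}r\neq0$; if $j=1$, take any $r\in R$ with $Mr\neq0$. Right multiplication by $1\otimes r$ commutes with the $A$-action, so it is an $A$-linear map $M\to M$. Precomposing with the projection $M\to M/MR^{j-1}\cdots$ is awkward; the cleaner choice is the map
\[
\theta\colon M\to M,\qquad m\mapsto m\,x,\quad x=r_1\cdots r_j\in R^{j},
\]
where the $r_i\in R$ are chosen so that $Mx\neq0$. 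This map is $A$-linear since $A\otimes1$ and $1\otimes B$ commute. It kills $MR$ because $R^{j+1}$ annihilates $M$. Its image lies in $MR^{j}\subseteq\operatorname{ann}_M(R)$. Hence $\theta$ factors as $M\to M/MR\to\operatorname{ann}_M(R)\hookrightarrow M$, and by the automatic $B$-linearity noted above it is $A\otimes B$-linear. Alternatively, observe directly that $\theta$ is $A\otimes B$-linear whenever $x$ lies in the centre of $B$ modulo the relevant annihilator. This is the point to double-check: for non-commutative $B$, right multiplication by $x$ is $B$-linear on $M$ only if $x$ commutes with $B$ modulo $\operatorname{ann}(M)$. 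The factorization argument supplies this, since $M/MR$ and $\operatorname{ann}_M(R)$ carry trivial $R$-actions, so $A$-linearity of the induced map suffices. Thus $\theta$ is a non-zero nilpotent endomorphism, contradicting that $M$ is a brick. Hence $MR=0$.

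\emph{Conclusion.} Once $MR=0$, $M$ is a module over $A\otimes(B/R)\simeq A$, that is, $M\simeq N\otimes S_B$ for some $A$-module $N$. Then $\End_A(N)\simeq\End_{A\otimes B}(M)\simeq\kk$ by \eqref{eq:hom}, so $N$ is a brick. This argument works directly with $R$, so the induction on Loewy length reduces to a single step.
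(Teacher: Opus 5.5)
Your proof is correct and is essentially the paper's argument with the induction collapsed into one step. The paper inducts on the Loewy length $\ell$ of $B$, using that $\rad^{\ell-1}B$ is central, so that it acts on a brick by a nilpotent scalar, hence by zero. You instead take $j$ maximal with $MR^{j}\neq0$ and note that right multiplication by $x\in R^{j}$ is $A\otimes B$-linear, because $bx-xb\in R^{j+1}$ annihilates $M$ (equivalently, by your factorization through $M/MR\to\operatorname{ann}_M(R)$); this gives a non-zero nilpotent endomorphism. In a final write-up, drop the unused choice of $r$ and the remark about what needs double-checking, since the factorization argument already settles the $B$-linearity.
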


\begin{proof}
We induct on the Loewy length $\ell$ of $B$.
The case $\ell=1$, where $B=\kk$ and $A\otimes B=A$, is clear.
We view a right $A\otimes B$-module as a right $A$-module $N$ together with a $\kk$-algebra homomorphism $\rho\colon B^{\op}\to\End_A(N)$ recording the commuting right $B$-action, and $\End_{A\otimes B}(N,\rho)$ is the centralizer of $\rho(B^{\op})$ in $\End_A(N)$.
Note that $B^{\op}$ is again local, with the same radical powers and centre as $B$.

Let $\ell\ge2$ and let $(N,\rho)$ be a brick, meaning the centralizer of $\rho(B^{\op})$ equals $\kk$.
The top radical power $\rad^{\ell-1}B$ is central in $B$, hence in $B^{\op}$. Indeed, for $z\in\rad^{\ell-1}B$ and $a=\alpha+r$ with $\alpha\in\kk$, $r\in\rad B$, we have $za=\alpha z=az$, since $zr,rz\in\rad^\ell B=0$.
Hence $\rho(z)$ commutes with $\rho(B^{\op})$ and lies in the centralizer $\kk$.
Being nilpotent, $\rho(z)=0$.
Thus $\rho$ factors through $\overline B^{\op}$, where $\overline B:=B/\rad^{\ell-1}B$ is local of Loewy length $\ell-1$, and $(N,\overline\rho)$ is again a brick, where $\rho=\overline{\rho}\circ\pi$ for the canonical projection $\pi\colon B^{\op}\to\overline B^{\op}$.
By the induction hypothesis $(N,\overline\rho)\simeq N\otimes S_{\overline B}$, that is $\overline\rho(\rad\overline B)=0$, and hence $\rho(\rad B)=0$.
Therefore $(N,\rho)\simeq N\otimes S_B$.
Conversely $\End_{A\otimes B}(N\otimes S_B)=\End_A(N)$, and $N\otimes S_B$ is a brick precisely when $N$ is.
\end{proof}

\begin{theorem}\label{thm:local}
Let $A$ be a finite-dimensional algebra and $B$ a local finite-dimensional algebra.
Then the equality
\[ \FPdim(A\otimes B)=\FPdim(A)+\FPdim(B) \]
holds.
\end{theorem}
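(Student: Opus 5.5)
The inequality $\FPdim(A\otimes B)\ge\FPdim(A)+\FPdim(B)$ is Theorem~\ref{thm:superadd}, so only the reverse inequality needs proof. We may assume $\FPdim(A)<\infty$; $\FPdim(B)=d:=\dim_\kk\Ext^1_B(S_B,S_B)$ is finite by Lemma~\ref{lem:localbrick}. The plan is to show that every finite semibrick of $A\otimes B$ has the form $\mathcal S\boxtimes\{S_B\}$ for a finite semibrick $\mathcal S$ of $A$. Then Lemma~\ref{lem:boxtimes} and Lemma~\ref{lem:kron} give
\[ \rho(\Qext{\mathcal S\boxtimes\{S_B\}})=\rho(\Qext{\mathcal S})+d\le\FPdim(A)+\FPdim(B), \]
and taking the supremum finishes the proof. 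Here $\Qext{\{S_B\}}$ is a single vertex with $d$ loops, so its adjacency matrix is the $1\times 1$ matrix $(d)$, and the Kronecker sum is simply $\adj{\Qext{\mathcal S}}+dI$.

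\textbf{Step 1.} Let $\mathcal U$ be a finite semibrick in $\mmod(A\otimes B)$. By Lemma~\ref{lem:tensorbricks}, each member is isomorphic to $N_i\otimes S_B$ for some brick $N_i$ of $A$. Since $S_B\simeq\kk$ is one-dimensional, the restriction $N\otimes S_B$ to $A$ is just $N$.

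\textbf{Step 2.} Show that $\mathcal S=\{N_i\}$ is a semibrick of $A$. By \eqref{eq:hom},
\[ \Hom_{A\otimes B}(N_i\otimes S_B,N_j\otimes S_B)\simeq\Hom_A(N_i,N_j)\otimes\End_B(S_B)=\Hom_A(N_i,N_j). \]
Hence the $N_i$ are pairwise non-isomorphic and $\Hom$-orthogonal. So $\mathcal U=\mathcal S\boxtimes\{S_B\}$, and Lemma~\ref{lem:boxtimes} identifies its $\Ext$-quiver through \eqref{eq:ext}.

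The only step requiring care is that the K\"unneth isomorphism applies to the given modules, i.e.\ that a brick of $A\otimes B$ is isomorphic to an external tensor product. That is precisely the content of Lemma~\ref{lem:tensorbricks}, so I expect no real obstacle. The remaining points are the $\infty$ conventions and the empty semibrick, which contributes $0$.
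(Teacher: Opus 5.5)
Your proof is correct and is essentially the paper's argument: Lemma~\ref{lem:tensorbricks} and \eqref{eq:hom} force every finite semibrick of $A\otimes B$ to be $\{N_i\otimes S_B\}$ for a semibrick $\{N_i\}$ of $A$, and \eqref{eq:ext} makes its adjacency matrix $\adj{\Qext{\{N_i\}}}+dI$. The only difference is cosmetic: you cite Theorem~\ref{thm:superadd} for the lower bound and Lemmas~\ref{lem:boxtimes} and~\ref{lem:kron} for the spectral radius, whereas the paper reads off both inequalities directly from this correspondence via Perron--Frobenius.
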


\begin{proof}
Put $\ell:=\dim_\kk\Ext^1_B(S_B,S_B)=\dim_\kk(\rad B/\rad^2B)$. It follows from  Lemma~\ref{lem:localbrick} that $\ell=\FPdim(B)$.
By Lemma~\ref{lem:tensorbricks} every semibrick of $A\otimes B$ has the form $\{N_i\otimes S_B\}$ for a semibrick $\{N_i\}$ of $A$, as $\Hom_{A\otimes B}(N_i\otimes S_B,N_j\otimes S_B)=\Hom_A(N_i,N_j)$.
For such a semibrick, \eqref{eq:ext} and $\Hom_A(N_i,N_j)=\delta_{ij}\kk$ give
\[ \dim_\kk\Ext^1_{A\otimes B}(N_i\otimes S_B,N_j\otimes S_B) =\dim_\kk\Ext^1_A(N_i,N_j)+\ell\,\delta_{ij}. \]
Hence the adjacency matrix of $\Qext{\{N_i\otimes S_B\}}$ is $\adj{\Qext{\{N_i\}}}+\ell I$, of spectral radius $\rho(\Qext{\{N_i\}})+\ell$ by Perron--Frobenius.
Taking the supremum over all semibricks of $A$ gives $\FPdim(A\otimes B)=\FPdim(A)+\FPdim(B)$.
\end{proof}

\begin{corollary}\label{cor:consequences}
Let $A$ be any finite-dimensional algebra.
Then
\begin{gather*}
   \FPdim(A\otimes\kk[x]/(x^m))=\FPdim(A)+1\quad(m\ge2),\\
   \FPdim\big(\kk[x_1,\dots,x_n]/(x_1^2,\dots,x_n^2)\big)=n.
\end{gather*}
In particular every non-negative integer occurs as a Frobenius--Perron dimension.
\end{corollary}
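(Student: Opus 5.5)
Both identities come directly from Theorem~\ref{thm:local} and Lemma~\ref{lem:localbrick}; the only work is computing $\dim_\kk(\rad B/\rad^2B)$ for the relevant local algebras $B$.

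For the first identity, take $B=\kk[x]/(x^m)$ with $m\ge2$. This algebra is local, with $\rad B=(x)$ and $\rad^2B=(x^2)$. Since $m\ge2$, the quotient $\rad B/\rad^2B$ is spanned by the class of $x$, so it is one-dimensional. (For $m=1$ it would be zero, which is why $m\ge2$ is assumed.) Lemma~\ref{lem:localbrick} then gives $\FPdim(B)=1$, and Theorem~\ref{thm:local} gives $\FPdim(A\otimes B)=\FPdim(A)+1$.

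For the second identity, put $C_n=\kk[x_1,\dots,x_n]/(x_1^2,\dots,x_n^2)$. It is local with radical $(x_1,\dots,x_n)$. Its $\kk$-basis consists of the square-free monomials, and $\rad^2C_n$ is spanned by those of degree at least $2$. Hence $\rad C_n/\rad^2C_n$ has basis the classes of $x_1,\dots,x_n$ and dimension $n$, so $\FPdim(C_n)=n$ by Lemma~\ref{lem:localbrick}.

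As a consistency check one can induct instead. Since $C_n\simeq C_{n-1}\otimes\kk[x]/(x^2)$, the first identity yields $\FPdim(C_n)=\FPdim(C_{n-1})+1$, starting from $C_0=\kk$ with $\FPdim(\kk)=0$. The value $\FPdim(\kk)=0$ holds because the only nonempty semibrick of $\kk$ is $\{\kk\}$, and $\Ext^1_\kk(\kk,\kk)=0$.

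Finally, $n=0$ is realized by $\kk$ and every $n\ge1$ by $C_n$, so every non-negative integer occurs as a Frobenius--Perron dimension. No step is a real obstacle. The only point needing care is the monomial-basis description of $\rad^2 C_n$, which is immediate because the relations $x_i^2=0$ are monomial.
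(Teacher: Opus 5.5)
Your proof is correct and follows the paper's route. The first identity comes from Theorem~\ref{thm:local} with $\FPdim(\kk[x]/(x^m))=1$, and the second from Lemma~\ref{lem:localbrick} applied to the $n$-dimensional space $\rad C_n/\rad^2 C_n$. Your explicit treatment of $n=0$ via $\FPdim(\kk)=0$, and the inductive cross-check through $C_n\simeq C_{n-1}\otimes\kk[x]/(x^2)$, only fill in details the paper leaves implicit.
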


\begin{proof}
Since $\kk[x]/(x^m)$ is local with $\FPdim(\kk[x]/(x^m))=1$, the first equality follows from Theorem~\ref{thm:local}.

As $\kk[x_1,\dots,x_n]/(x_1^2,\dots,x_n^2)\simeq(\kk[x]/(x^2))^{\otimes n}$ is local with radical-square quotient of dimension $n$, the second equality follows by using Lemma~\ref{lem:localbrick}.
\end{proof}

The local hypothesis on $B$ in Theorem~\ref{thm:local} relaxes to a finite direct product of local algebras.

\begin{corollary}\label{cor:localproduct}
If $B$ is a finite direct product of local finite-dimensional algebras, then we have
\[ \FPdim(A\otimes B)=\FPdim(A)+\FPdim(B). \]
\end{corollary}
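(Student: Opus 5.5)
Write $B=B_1\times\dots\times B_r$ with each $B_j$ local. Tensor products distribute over finite direct products, so
\[ A\otimes B\simeq\prod_{j=1}^r (A\otimes B_j) \]
as algebras. The plan is to apply Lemma~\ref{lem:product} to both sides of the claimed equality and then compare factor by factor using Theorem~\ref{thm:local}.

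First, Lemma~\ref{lem:product} gives $\FPdim(A\otimes B)=\max_j\FPdim(A\otimes B_j)$. Second, since each $B_j$ is local, Theorem~\ref{thm:local} gives $\FPdim(A\otimes B_j)=\FPdim(A)+\FPdim(B_j)$. Third, Lemma~\ref{lem:product} applied to $B$ itself gives $\FPdim(B)=\max_j\FPdim(B_j)$. Combining the three,
\[ \FPdim(A\otimes B)=\max_j\big(\FPdim(A)+\FPdim(B_j)\big)=\FPdim(A)+\max_j\FPdim(B_j)=\FPdim(A)+\FPdim(B). \]
The middle equality holds because adding the constant $\FPdim(A)$ commutes with taking a maximum, also in $[0,\infty]$.

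The only points needing care are minor. One is the distributivity isomorphism $A\otimes\prod_j B_j\simeq\prod_j(A\otimes B_j)$, which holds because tensoring with $A$ over $\kk$ commutes with finite direct sums and respects the multiplication componentwise. The other is the case $\FPdim(A)=\infty$: then both sides are $\infty$, and the computation above remains valid in $[0,\infty]$. I do not expect a genuine obstacle here.
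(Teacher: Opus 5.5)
Your proposal is correct and is essentially the paper's proof: decompose $A\otimes B\simeq\prod_j(A\otimes B_j)$, apply Lemma~\ref{lem:product}, use Theorem~\ref{thm:local} on each factor, and pull the constant $\FPdim(A)$ out of the maximum. Your explicit appeal to Lemma~\ref{lem:product} for $\FPdim(B)=\max_j\FPdim(B_j)$ is the step the paper leaves implicit.
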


\begin{proof}
Write $B=\prod_{i=1}^r B_i$ with each $B_i$ local, so that $A\otimes B\simeq\prod_{i=1}^r(A\otimes B_i)$.
By Lemma~\ref{lem:product} and Theorem~\ref{thm:local},
\begin{align*}
   \FPdim(A\otimes B)&=\max_i\{\FPdim(A\otimes B_i)\}\\
   &=\FPdim(A)+\max_i\{\FPdim(B_i)\}\\
   &=\FPdim(A)+\FPdim(B).
\end{align*}
Therefore, the assertion follows.
\end{proof}

\subsection{Tame and wild tensor products}
We turn to the value of $\FPdim$ on grids, and more generally on tame and wild tensor products.
Recall \cite{ASS} that an algebra $\kk Q/I$ is \emph{simply connected} if $Q$ is connected and the fundamental group of $(Q,I)$ is trivial, and \emph{strongly simply connected} if in addition every convex subcategory of it is simply connected.
A simply connected algebra is $\tau$-tilting finite if and only if it is representation-finite \cite[Theorem~3.4]{W}, and the representation-infinite grids below fall outside the $\tau$-tilting methods of \cite{AK}.
Here $\kk A_m$ denotes the path algebra of the linearly oriented quiver $1\to2\to\cdots\to m$, equivalently the incidence algebra of the chain $[m]=\{1<2<\cdots<m\}$. The grid $\kk A_m\otimes\kk A_n$ is then the incidence algebra of the product poset $[m]\times[n]$.
It is strongly simply connected, every convex subcategory being the incidence algebra of an order-convex subposet of $[m]\times[n]$, in which every cycle is generated by the commuting squares of the grid.

The value of $\FPdim$ on a tame tensor product is governed by its representation type.
We isolate two facts that settle the two non-degenerate cases of the grids below.

The first concerns cycle-finite algebras. A \emph{cycle} in $\mmod A$ is a sequence $M_0\to M_1\to\cdots\to M_t=M_0$ of non-zero non-isomorphisms between indecomposable modules. It is \emph{finite} when none of its morphisms lies in the infinite radical $\rad^\infty(\mmod A)$, and $A$ is \emph{cycle-finite} when every cycle in $\mmod A$ is finite. Representation-finite, tame tilted, tame concealed and tubular algebras are all cycle-finite, and every cycle-finite algebra is tame of polynomial growth \cite[Theorem~4.3]{SkCF}.

We first record that a strongly connected piece of an $\Ext$-quiver cannot be spread over several Auslander--Reiten components. Recall that for indecomposable modules $X,Y$, the existence of a non-zero morphism $X\to Y$ outside $\rad^\infty(\mmod A)$ forces $X$ and $Y$ to lie in the same component of the Auslander--Reiten quiver $\Gamma_A$, since such a morphism is a sum of compositions of irreducible maps and hence yields a path of irreducible maps from $X$ to $Y$ \cite[\S IV.5]{ASS}.

\begin{lemma}\label{lem:cyccomp}
Let $A$ be a cycle-finite algebra and $\mathcal S$ a finite semibrick.
Let $\mathcal C$ be a strongly connected component of $\Qext{\mathcal S}$ carrying an oriented cycle.
Then all vertices of $\mathcal C$ lie in a single component $\Gamma$ of $\Gamma_A$, and $\Gamma$ carries an oriented cycle.
\end{lemma}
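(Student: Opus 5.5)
Each arrow $S\to S'$ of $\Qext{\mathcal S}$ will be turned into a cycle in $\mmod A$ that passes through $S$ and $S'$. Cycle-finiteness then forces every morphism of that cycle to lie outside $\rad^\infty$, which keeps $S$ and $S'$ in one Auslander--Reiten component. Strong connectivity of $\mathcal C$ spreads this to all of $\mathcal C$.

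\emph{Step 1.} Suppose there is an arrow $S\to S'$, so $\Ext^1_A(S,S')\neq 0$. By the Auslander--Reiten formula, $\overline{\Hom}_A(S',\tau S)\neq0$, so there is a non-zero morphism $f\colon S'\to\tau S$. Its target $\tau S$ is indecomposable and non-projective, because $S$ is indecomposable and not projective, as $\Ext^1_A(S,-)\neq 0$. Take an Auslander--Reiten sequence $0\to\tau S\to E\to S\to0$. Composing $\tau S\to E$ with a suitable projection to an indecomposable summand $E_1$ of $E$, and then with $E_1\to S$, gives a chain of irreducible morphisms $\tau S\to E_1\to S$.

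\emph{Step 2.} Build the cycle. Since $\mathcal C$ is strongly connected and carries an oriented cycle, every vertex lies on an oriented cycle $S_0\to S_1\to\cdots\to S_t=S_0$ in $\mathcal C$. For each arrow $S_i\to S_{i+1}$, Step 1 gives non-zero maps
\[
S_{i+1}\to\tau S_i\to E_{i,1}\to S_i.
\]
These run backwards along the arrow. Concatenating them around the cycle gives a cycle $S_0\to\tau S_{t-1}\to E\to S_{t-1}\to\cdots\to S_0$ of non-zero maps between indecomposables. The maps are non-isomorphisms: the irreducible ones are never isomorphisms, and $S'\to\tau S$ cannot be one either, since otherwise $\Ext^1(S,S')\simeq D\overline{\End}(S')$ and the chain becomes irreducible anyway. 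If $S'\simeq\tau S$, I would instead use the irreducible maps directly.

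\emph{Step 3.} By cycle-finiteness, none of these morphisms lies in $\rad^\infty(\mmod A)$. By the remark preceding the lemma, consecutive modules therefore lie in the same component of $\Gamma_A$. So $S_0,\dots,S_t$ all lie in one component $\Gamma$, and by strong connectivity every vertex of $\mathcal C$ does too.

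\emph{Step 4.} Produce the oriented cycle in $\Gamma$. Each morphism of the cycle lies outside $\rad^\infty$, so it is a sum of composites of irreducible maps, one of which is non-zero. That composite gives a path in $\Gamma$ from its source to its target, and concatenating these paths gives a closed path of positive length in $\Gamma$. When $\mathcal C$ is a single loop $S\to S$, the path $S\to\tau S\to E_1\to S$ is already an oriented cycle.

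The main obstacle is Step 1. The non-zero map $f\colon S'\to\tau S$ exists only modulo injectives, and $f$ must be a genuine non-zero non-isomorphism between indecomposables. Any $f$ representing a non-zero class in $\overline{\Hom}$ is non-zero, and $S'$, $\tau S$ are indecomposable, so only the degenerate isomorphism case needs the separate handling indicated above.
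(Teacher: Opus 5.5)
Your proposal is correct and is essentially the paper's argument: along an oriented cycle of $\mathcal C$ you splice the non-zero maps $S_{i+1}\to\tau S_i$ from the Auslander--Reiten formula with the irreducible maps $\tau S_i\to E_{i,1}\to S_i$ from the almost split sequences, absorbing an isomorphism $S_{i+1}\simeq\tau S_i$ when one occurs, and cycle-finiteness then keeps every map outside $\rad^\infty(\mmod A)$, which yields a single component containing a closed walk of irreducible maps. Two small slips should be fixed: $\tau S$ is non-injective rather than non-projective (this plays no role), and the sentence claiming $S'\to\tau S$ ``cannot be'' an isomorphism should be dropped in favour of your separate treatment of that case.
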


\begin{proof}
Fix an oriented cycle $T_{i_0}\to T_{i_1}\to\cdots\to T_{i_s}=T_{i_0}$ in $\mathcal C$.
Each arrow $T_{i_k}\to T_{i_{k+1}}$ means $\Ext^1_A(T_{i_k},T_{i_{k+1}})\neq0$, so $T_{i_k}$ is non-projective and $T_{i_{k+1}}$ is non-injective.
Thus every $T_{i_k}$ on the cycle is non-projective, and the Auslander--Reiten translate $\tau T_{i_k}$ is defined.
By the Auslander--Reiten formula there is a non-zero map $f_k\colon T_{i_{k+1}}\to\tau T_{i_k}$. 
The almost split sequence $0\to\tau T_{i_k}\to E_k\to T_{i_k}\to0$ provides, for any indecomposable summand $E_k'$ of $E_k$, irreducible maps $\tau T_{i_k}\to E_k'\to T_{i_k}$.
Splicing the $f_k$ with these irreducible maps around the cycle yields a cycle of non-zero non-isomorphisms in $\mmod A$ passing through $T_{i_0}$; if some $f_k$ happens to be an isomorphism, it is absorbed into the adjacent irreducible map.
As $A$ is cycle-finite, none of the morphisms of this cycle lies in $\rad^\infty(\mmod A)$.
In particular each $f_k\notin\rad^\infty$, so $T_{i_{k+1}}$ and $\tau T_{i_k}$ lie in the same component of $\Gamma_A$; together with the irreducible maps $\tau T_{i_k}\to E_k'\to T_{i_k}$ this places all the $T_{i_k}$ in one component $\Gamma$, which therefore contains the closed walk of irreducible maps obtained from the paths $T_{i_{k+1}}\rightsquigarrow\tau T_{i_k}\to E_k'\to T_{i_k}$, hence an oriented cycle.
Finally, as $\mathcal C$ is strongly connected, any of its vertices lies on an oriented cycle through $T_{i_0}$, and the same argument places it in $\Gamma$.
\end{proof}

Next we bound the spectral radius inside a tube. We recall the relevant structure of standard tubes, following \cite[\S4.5--4.6]{Ringel}; see also \cite{SkCF}. 
A \emph{stable tube} is a translation quiver of the form $\mathbb Z\mathbb A_\infty/(\tau^r)$, $r\ge1$; a \emph{ray tube} (respectively \emph{coray tube}) is obtained from a stable tube by a finite number of ray (respectively coray) insertions; and a ray or coray tube is standard if and only if it is generalized standard \cite{SkCF}.
Let $\mathcal C$ be a standard stable, ray, or coray tube of rank $r$. 
Its regular-simple modules at the mouth form a single $\tau$-orbit $E_0,\dots,E_{r-1}$, with $\tau E_i\simeq E_{i-1}$ and subscripts taken modulo $r$. 
Every brick $X$ in $\mathcal C$ is uniserial for its regular composition series and is determined up to isomorphism by its regular socle $\soc_{\mathrm r}X\in\{E_0,\dots,E_{r-1}\}$ and its regular length $\ell(X)\in\{1,\dots,r\}$.
Its regular top is then $\topp_{\mathrm r}X=\tau^{-(\ell(X)-1)}\soc_{\mathrm r}X$, so that $\soc_{\mathrm r}X=E_i$ gives $\topp_{\mathrm r}X=E_{i+\ell(X)-1}$. For bricks $X,Y\in\mathcal C$ one has $\dim_\kk\Hom_A(X,Y)\le1$, and a non-zero map factors as $X\twoheadrightarrow I\hookrightarrow Y$ through a uniserial module $I$ with $\topp_{\mathrm r}I=\topp_{\mathrm r}X$ and $\soc_{\mathrm r}I=\soc_{\mathrm r}Y$. Consequently, writing $\topp_{\mathrm r}X=E_s$ and $\soc_{\mathrm r}Y=E_t$, $\Hom_A(X,Y)\neq0$ if and only if $\langle s-t+1\rangle\le\min(\ell(X),\ell(Y))$,where $\langle a\rangle$ denotes the representative of $a$ modulo $r$ in $\{1,\dots,r\}$. 
Finally, for a non-projective brick $X$ the translate $\tau X$ again belongs to $\mathcal C$, with $\soc_{\mathrm r}(\tau X)=\tau\soc_{\mathrm r}X$ and $\ell(\tau X)=\ell(X)$.

\begin{lemma}\label{lem:tube}
A finite semibrick contained in a standard stable, ray, or coray tube has $\Ext$-quiver of spectral radius at most $1$.
\end{lemma}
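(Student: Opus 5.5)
We have to show that if $\mathcal S=\{X_1,\dots,X_p\}$ is a finite semibrick inside a standard stable, ray or coray tube $\mathcal C$ of rank $r$, then $\rho(\Qext{\mathcal S})\le 1$. The strategy is to prove that every vertex of $\Qext{\mathcal S}$ has out-degree at most $1$, counted with multiplicity. A quiver in which every vertex has at most one outgoing arrow has adjacency matrix with all row sums $\le 1$. Hence its spectral radius is at most the maximal row sum, which is $\le 1$. This is the standard bound $\rho(M)\le\|M\|_\infty$ for non-negative matrices.

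To bound the out-degree we use the Auslander--Reiten formula. For each $X\in\mathcal S$ we have $\dim_\kk\Ext^1_A(X,Y)\le\dim_\kk\Hom_A(Y,\tau X)$. If $X$ is projective the left side is $0$, so assume $X$ is non-projective. Then $\tau X$ lies in $\mathcal C$, with
\[
\soc_{\mathrm r}(\tau X)=\tau\soc_{\mathrm r}X \quad\text{and}\quad \ell(\tau X)=\ell(X).
\]
Moreover $\tau X$ is again a brick: it is uniserial of regular length $\le r$ and is determined by (socle, length), so it is of the same shape as $X$. By the Hom-dimension bound in the tube, each $Y\in\mathcal S$ contributes at most one arrow $X\to Y$. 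It therefore suffices to show that at most one $Y\in\mathcal S$ admits a non-zero map $Y\to\tau X$.

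This uniqueness is the core step, and the main obstacle. Suppose $Y,Y'\in\mathcal S$ are distinct and both map non-zero to $Z:=\tau X$. Each non-zero map factors as $Y\twoheadrightarrow I\hookrightarrow Z$, where $I$ is a regular submodule of $Z$ containing $\soc_{\mathrm r}Z$, with $\topp_{\mathrm r}I=\topp_{\mathrm r}Y$. Since $Z$ is uniserial, the images $I\subseteq Z$ and $I'\subseteq Z$ are comparable; say $I'\subseteq I$. We then try to produce a non-zero map between $Y$ and $Y'$, contradicting the semibrick property. The candidate is the composite $Y\twoheadrightarrow I\twoheadrightarrow I/(\text{something})$, or one uses the combinatorial criterion $\langle s-t+1\rangle\le\min(\ell(X),\ell(Y))$ directly.

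In coordinates, write $\soc_{\mathrm r}Z=E_c$ and $\ell(Z)=\ell\le r$. A non-zero map $Y\to Z$ means that $\topp_{\mathrm r}Y=E_{c+k-1}$ for some $1\le k\le\min(\ell,\ell(Y))$. If two members $Y,Y'$ have tops $E_{c+k-1}$ and $E_{c+k'-1}$ with $k'\le k$, then $Y$ has regular length at least $k$. It therefore has a regular composition factor $E_{c+k'-1}$ sitting $k-k'$ steps below its top, and one checks with the criterion that $\Hom_A(Y',Y)\neq 0$ or $\Hom_A(Y,Y')\neq0$. The key is that the relevant residue $\langle\cdot\rangle$ stays within $\{1,\dots,r\}$ because all lengths are $\le r$. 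Hence the wrap-around modulo $r$ cannot create a spurious zero.

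I expect the careful index bookkeeping modulo $r$ to be the delicate point. If the direct inequality $\langle s-t+1\rangle\le\min$ is awkward, I would instead argue module-theoretically. The submodule $I'$ of the uniserial module $I$ (image of $Y$) is a quotient of $Y'$. Then $Y'\twoheadrightarrow I'\hookrightarrow I$ would give a map to $Z$, but not to $Y$. So one must pass instead to a factor: the socle part of $Y'$ versus top part of $Y$. This yields $\Hom(Y',Y)$ or $\Hom(Y,Y')$ non-zero through the intersection of regular composition intervals. Once uniqueness holds, every row of $\adj{\Qext{\mathcal S}}$ has sum $\le1$, and the bound $\rho\le1$ follows.
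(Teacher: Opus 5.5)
Your outer reduction is sound. Out-degree at most $1$ gives row sums at most $1$, hence $\rho\le 1$. Reading the Auslander--Reiten formula as the inequality $\dim_\kk\Ext^1_A(X,Y)\le\dim_\kk\Hom_A(Y,\tau X)$ is also legitimate, and it even spares you the reduction to ray tubes and the identification $\overline{\Hom}=\Hom$. The claim you reduce to, that at most one $Y\in\mathcal S$ maps non-zero into $\tau X$, is true.

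Your argument for that claim, however, does not work. You try to show that any two distinct members $Y,Y'$ of a semibrick that both map non-zero into $Z=\tau X$ admit a non-zero morphism between them, and this is false. Work in a standard stable tube of rank $3$ (such tubes occur for tame hereditary algebras). Write $E_i[\ell]$ for the brick with regular socle $E_i$ and regular length $\ell$; its regular composition factors, from socle to top, are $E_i,E_{i+1},\dots,E_{i+\ell-1}$. Take $Z=E_0[2]$, $Y=E_2[3]$ (factors $E_2,E_0,E_1$) and $Y'=E_0$. Then $Y\twoheadrightarrow Z$ and $E_0\hookrightarrow Z$. Yet $\Hom_A(Y,E_0)=0=\Hom_A(E_0,Y)$, since $\topp_{\mathrm r}Y=E_1$ and $\soc_{\mathrm r}Y=E_2$, so $\{E_0,E_2[3]\}$ is a semibrick. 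In your coordinates $k=2$ and $k'=1$. $E_0$ does sit one step below the top of $Y$, but as a middle composition factor it yields no morphism in either direction, and your module-theoretic fallback fails for the same reason. What rules this configuration out in the lemma is that $X=\tau^{-1}Z=E_1[2]$ also lies in $\mathcal S$, and $\Hom_A(E_2[3],E_1[2])\neq0$. Your argument never uses $X\in\mathcal S$ except to define $Z$.

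The missing idea is to test each target against $X$ itself, as the paper does with $(\ast)$, $(\ast\ast)$ and $(\dagger)$. In your notation, $\soc_{\mathrm r}Z=E_c$, $\ell(Z)=\ell$, and $\topp_{\mathrm r}Y=E_{c+k-1}$ with $1\le k\le\min(\ell,\ell(Y))$; then $\soc_{\mathrm r}X=E_{c+1}$ and $\ell(X)=\ell$.

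If $k\ge2$, the uniserial module with regular top $E_{c+k-1}$ and regular socle $E_{c+1}$ has regular length $k-1$. It is both a quotient of $Y$ and a submodule of $X$, so $\Hom_A(Y,X)\neq0$, which the semibrick condition forbids unless $Y\simeq X$.

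If $Y\simeq X$, then $\topp_{\mathrm r}X=E_{c+\ell}$ must equal $E_{c+k-1}$ with $k\le\ell\le r$. This forces $\ell=r$ and $k=1$.

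So every $Y\in\mathcal S$ with $\Hom_A(Y,\tau X)\neq0$ has $\topp_{\mathrm r}Y=\soc_{\mathrm r}(\tau X)$. Any two such targets share their regular top, so the longer one surjects onto the shorter, and the semibrick condition leaves only one. With this step inserted, your proof goes through.
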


\begin{proof}
By the standard $\kk$-duality $D\colon\mmod A\to\mmod A^{\op}$, which carries a coray tube to a ray tube, sends semibricks to semibricks, and satisfies $\Ext^1_A(X,Y)\simeq\Ext^1_{A^{\op}}(DY,DX)$, the $\Ext$-quiver is replaced by its opposite, of the same spectral radius. 
We may therefore assume that $\mathcal C$ is a stable or ray tube, so that $\mathcal C$ contains no injective module. 
Since $\mathcal C$ is generalized standard, the infinite radical $\rad^\infty(\mmod A)$ vanishes between any two of its objects. A homomorphism between objects of $\mathcal C$ that factors through an injective module lies in this radical and therefore vanishes, so no non-zero map in $\mathcal C$ factors through an injective. 
Consequently $\overline{\Hom}_A(M,N)=\Hom_A(M,N)$ for all $M,N\in\mathcal C$.

Let $\mathcal S'\subseteq\mathcal C$ be a finite semibrick. We show that each vertex of $\Qext{\mathcal S'}$ has out-degree at most $1$.
This claim implies that every row of $\adj{\Qext{\mathcal S'}}$ has sum at most $1$, so $\rho(\Qext{\mathcal S'})\le\|\adj{\Qext{\mathcal S'}}\|_\infty\le1$.

Let $X\in\mathcal S'$. If $X$ is projective then $\Ext^1_A(X,-)=0$ and $X$ has out-degree $0$.
Assume that $X$ is non-projective and put $\soc_{\mathrm r}X=E_i$ and $\ell(X)=c$x
Then $\soc_{\mathrm r}(\tau X)=E_{i-1}$ and $\ell(\tau X)=c$.

Let $X\to Y$ be an arrow, that is, $\Ext^1_A(X,Y)\neq0$. 
By the Auslander--Reiten formula and the above argument,
\[
   0\neq\Ext^1_A(X,Y)\simeq D\,\overline{\Hom}_A(Y,\tau X)=D\,\Hom_A(Y,\tau X). 
\]
Put $\topp_{\mathrm r}Y=E_s$.
Then we obtain 
\begin{equation*}
   \langle s-(i-1)+1\rangle\le\min(\ell(Y),c).\tag{$\ast$}
\end{equation*}
Assume moreover $Y\not\simeq X$. 
As $\mathcal S'$ is a semibrick, we have $\Hom_A(Y,X)=0$, and
\begin{equation*}
   \langle s-i+1\rangle>\min(\ell(Y),c).\tag{$\ast\ast$}
\end{equation*}
Put $m:=\min(\ell(Y),c)$ and $d:=s-i+1$, so that $(\ast)$ reads $\langle d+1\rangle\le m$ and $(\ast\ast)$ reads $\langle d\rangle>m$. If $\langle d\rangle\le r-1$ then $\langle d+1\rangle=\langle d\rangle+1\ge m+2>m$, contradicting $(\ast)$; hence $\langle d\rangle=r$, that is $d\equiv0\pmod r$. 
Therefore
\begin{equation*}
   \topp_{\mathrm r}Y=E_s=E_{i-1}=\soc_{\mathrm r}(\tau X).\tag{$\dagger$}
\end{equation*}
The conclusion $(\dagger)$ also holds when $Y\simeq X$: a loop $\Ext^1_A(X,X)\neq0$ gives, via $(\ast)$ with $s=i+c-1$ and $m=c$, the inequality $\langle c+1\rangle\le c$, which forces $c=r$ since $\langle c+1\rangle=c+1>c$ for $c<r$; then $\topp_{\mathrm r}X=E_{i+r-1}=E_{i-1}=\soc_{\mathrm r}(\tau X)$.

Thus every target of an arrow leaving $X$ has regular top $E_{i-1}$. If $Y_1$ and $Y_2$ are two such targets with $\ell(Y_1)\ge\ell(Y_2)$, then the regular top-quotient of $Y_1$ of length $\ell(Y_2)$ is uniserial with regular top $E_{i-1}$ and regular length $\ell(Y_2)$, hence isomorphic to $Y_2$.

This gives a non-zero map $Y_1\twoheadrightarrow Y_2$, so $\Hom_A(Y_1,Y_2)\neq0$ and $Y_1\simeq Y_2$. Since $\dim_\kk\Ext^1_A(X,Y)\le1$ for the unique such target $Y$, the vertex $X$ has out-degree at most $1$, as required.
\end{proof}

\begin{proposition}\label{prop:cyclefinite}
Let $A$ be a cycle-finite algebra such that every component of $\Gamma_A$ containing an oriented cycle is a stable, ray, or coray tube.
Then
\[ \FPdim(A)\le1, \]
with equality when $A$ is representation-infinite.
\end{proposition}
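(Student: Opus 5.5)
For the upper bound I would fix a finite semibrick $\mathcal S$ and bound $\rho(\Qext{\mathcal S})$ by reducing to the strongly connected components of the $\Ext$-quiver. The spectral radius of a quiver is the maximum of the spectral radii of its strongly connected components: order the vertices compatibly with the condensation, so the adjacency matrix becomes block upper triangular and its eigenvalues are those of the diagonal blocks. This refines Lemma~\ref{lem:rho}. A strongly connected component with no oriented cycle is a single vertex without loop, so it contributes $0$. It therefore suffices to show $\rho(\mathcal C)\le1$ for each strongly connected component $\mathcal C$ that carries an oriented cycle.

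For such a $\mathcal C$, Lemma~\ref{lem:cyccomp} places all vertices of $\mathcal C$ in one component $\Gamma$ of $\Gamma_A$, and $\Gamma$ carries an oriented cycle. By hypothesis $\Gamma$ is a stable, ray, or coray tube. To apply Lemma~\ref{lem:tube}, the tube must be standard. A tube in $\Gamma_A$ of a cycle-finite algebra is generalized standard: a nonzero map in $\rad^\infty$ between two modules of the tube could be spliced with paths of irreducible maps inside the tube, which is possible since a tube has oriented cycles through every vertex, or at least paths back. This would give an infinite cycle, contradicting cycle-finiteness. For ray and coray tubes, generalized standard is equivalent to standard, as recalled above; for stable tubes I would invoke the corresponding statement from \cite{SkCF}.

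The vertices of $\mathcal C$ form a finite semibrick in $\Gamma$, so Lemma~\ref{lem:tube} gives that its $\Ext$-quiver has spectral radius at most $1$. The full subquiver of $\Qext{\mathcal S}$ on $\mathcal C$ is exactly that $\Ext$-quiver, so $\rho(\mathcal C)\le1$. Taking the maximum over components gives $\rho(\Qext{\mathcal S})\le1$, and taking the supremum gives $\FPdim(A)\le1$. I expect the standardness justification to be the main obstacle, namely ensuring that every arrow of the splicing avoids $\rad^\infty$ in the right direction.

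For equality when $A$ is representation-infinite, I need a semibrick with a loop, for instance a single brick $X$ with $\Ext^1_A(X,X)\ne0$. A representation-infinite cycle-finite algebra has infinitely many components, and by the known structure theory (\cite{SkCF}) it must contain a component with an oriented cycle that is not finite, hence a tube of some rank $r$ by hypothesis. I would take the brick $X$ of regular length $r$ in a standard stable tube, available via \cite{SkCF} for the representation-infinite case. Then $\tau X$ has the same regular length, and $\Hom(X,\tau X)\neq0$ by the criterion recalled before Lemma~\ref{lem:tube}, so the Auslander--Reiten formula gives $\Ext^1(X,X)\ne0$. Hence $\rho(\Qext{\{X\}})\ge1$ and $\FPdim(A)=1$. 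If stable tubes are not guaranteed, I would run the same argument in a ray tube, choosing a non-projective brick of length $r$.
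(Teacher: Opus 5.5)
Your proposal is correct and follows essentially the paper's proof. Both arguments use the block-triangular reduction to strongly connected components of $\Qext{\mathcal S}$, then Lemma~\ref{lem:cyccomp} to place a cyclic component inside one tube, standardness of that tube, Lemma~\ref{lem:tube}, and finally a self-extending brick in a stable tube for the lower bound.

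The differences are minor. First, you prove generalized standardness directly by splicing a $\rad^\infty$-map with a path of irreducible maps back through the tube, whereas the paper cites \cite{SkCF}; your argument needs the tube to be strongly connected as a quiver, not merely that each vertex lies on a cycle. Second, you take the quasi-length-$r$ brick of a rank-$r$ stable tube, whereas the paper takes the quasi-simple of a homogeneous tube. The paper also gets the existence of that tube more cleanly than your sketch does: a cycle-finite algebra is tame, and almost all indecomposables of a fixed dimension lie in homogeneous tubes.
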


\begin{proof}
Let $\mathcal S$ be a finite semibrick of $A$.
Ordering the strongly connected components of $\Qext{\mathcal S}$ makes $\adj{\Qext{\mathcal S}}$ block triangular, and $\rho(\Qext{\mathcal S})$ is then the largest spectral radius of a diagonal block.
A component reduced to a single vertex without a loop contributes $0$, and it remains to bound the spectral radius of a strongly connected component $\mathcal C=\{T_1,\dots,T_p\}$ carrying an oriented cycle.

By Lemma~\ref{lem:cyccomp} the $T_i$ lie in a single component $\Gamma$ of $\Gamma_A$, and $\Gamma$ carries an oriented cycle.
By hypothesis $\Gamma$ is a stable, ray, or coray tube. Being semi-regular, it is generalized standard \cite[Proposition~3.3]{SkCF}, and a semi-regular tube is standard if and only if it is generalized standard \cite[\S2]{SkCF}; hence $\Gamma$ is standard.
Thus $\{T_1,\dots,T_p\}$ is a finite semibrick contained in a standard tube, and Lemma~\ref{lem:tube} gives $\rho(\Qext{\mathcal C})\le1$.
Hence $\rho(\Qext{\mathcal S})\le1$ and $\FPdim(A)\le1$.

Suppose finally that $A$ is representation-infinite.
Being cycle-finite, it is tame, and all but finitely many indecomposables of a given dimension lie in homogeneous stable tubes \cite{SkCF}.
The quasi-simple module $X$ of such a tube is a brick with $\tau X\simeq X$.
Being non-injective, it satisfies $\Ext^1_A(X,X)\simeq D\,\overline{\Hom_A}(X,X)\neq0$, and $\{X\}$ is a semibrick with $\rho=1$.
Therefore $\FPdim(A)=1$.
\end{proof}

\begin{remark}\label{rem:cyclefinitesharp}
The hypothesis on the components is essential, and cannot be replaced by cycle-finiteness alone.
Every representation-finite algebra is cycle-finite, since $\rad^\infty(\mmod A)=0$.
For any algebra the simple modules form a semibrick whose $\Ext$-quiver is the Gabriel quiver of $A$, so $\FPdim(A)\ge\rho(Q_A)$ where $Q_A$ is the Gabriel quiver.
A representation-finite self-injective algebra that is not a Nakayama algebra, such as the Brauer tree algebra of a line with three edges or the preprojective algebra of type $\mathbb A_3$, has Gabriel quiver the double $1\rightleftarrows2\rightleftarrows3$ of $\mathbb A_3$, of spectral radius $\sqrt2$; for these $\FPdim=\sqrt2>1$ \cite[Theorem~1.5]{AK}, while a finite cyclic Auslander--Reiten component is not a tube.
\end{remark}

These hypotheses are met by the two basic tame classes.

\begin{theorem}\label{thm:tubular}
Every tubular algebra and every tame concealed algebra $A$ satisfies
\[ \FPdim(A)=1. \]
\end{theorem}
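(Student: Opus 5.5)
The plan is to deduce the statement directly from Proposition~\ref{prop:cyclefinite}. That proposition requires three hypotheses: $A$ is cycle-finite, every component of $\Gamma_A$ containing an oriented cycle is a stable, ray, or coray tube, and $A$ is representation-infinite. Once these are checked for each of the two classes, the equality $\FPdim(A)=1$ follows. Both classes are representation-infinite by definition: a tame concealed algebra is concealed of Euclidean type, and a tubular algebra has infinitely many tubular families.

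For a tame concealed algebra $A$, I would use the known description of $\Gamma_A$ (Ringel \cite{Ringel}). It consists of a preprojective component $\mathcal P$, a preinjective component $\mathcal Q$, and a $\mathbb P^1(\kk)$-family of pairwise orthogonal standard stable tubes. The components $\mathcal P$ and $\mathcal Q$ are acyclic, since they are directing. Every non-zero morphism from a tube to $\mathcal P$ vanishes, and so does every morphism from $\mathcal Q$ to a tube or to $\mathcal P$. Consequently any cycle in $\mmod A$ stays inside a single standard tube, and standard tubes are generalized standard, so $\rad^\infty$ vanishes between their objects. This gives cycle-finiteness; alternatively I would simply cite \cite{SkCF} for the fact that tame concealed algebras are cycle-finite, which the paper has already stated. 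The components carrying oriented cycles are exactly the stable tubes.

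For a tubular algebra $A$, I would use Ringel's structure theorem \cite[\S5]{Ringel}. It gives
\[
\mmod A=\mathcal P_0\vee\mathcal T_0\vee\Big(\bigvee_{q\in\mathbb Q_{>0}}\mathcal T_q\Big)\vee\mathcal T_\infty\vee\mathcal Q_\infty .
\]
Here $\mathcal P_0$ is the preprojective component of the tame concealed algebra $A_0$, and $\mathcal Q_\infty$ is the preinjective component of the tame concealed algebra $A_\infty$. $\mathcal T_0$ is a family of standard ray tubes, $\mathcal T_\infty$ is a family of standard coray tubes, and each $\mathcal T_q$ for $q\in\mathbb Q_{>0}$ is a $\mathbb P^1(\kk)$-family of standard stable tubes. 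Non-zero morphisms only go from left to right in this decomposition. Hence every cycle lies in a single tube, which is standard, so cycle-finiteness holds; again \cite{SkCF} lists tubular algebras as cycle-finite. Moreover, every cyclic component is one of the listed stable, ray, or coray tubes, because $\mathcal P_0$ and $\mathcal Q_\infty$ are directing.

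Having verified the hypotheses in both cases, I would apply Proposition~\ref{prop:cyclefinite} to obtain $\FPdim(A)=1$. The main obstacle is ensuring that the structural input is cited precisely, since the proof itself is short. Two points need care. First, the preprojective and preinjective components must genuinely contain no oriented cycles. Second, the ray and coray tubes in $\mathcal T_0$ and $\mathcal T_\infty$ must have the form required by Lemma~\ref{lem:tube}, that is, obtained from stable tubes by ray (respectively coray) insertions. Both facts are part of Ringel's theorem, and I would quote them from \cite{Ringel} rather than reprove them.
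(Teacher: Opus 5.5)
Your proposal is correct and follows essentially the same route as the paper. Both arguments verify the hypotheses of Proposition~\ref{prop:cyclefinite}: cycle-finiteness and representation-infiniteness of the two classes, and, from the known shape of $\Gamma_A$ (preprojective and preinjective components acyclic, all other components stable, ray, or coray tubes), that every component carrying an oriented cycle is a tube; your optional direct derivation of cycle-finiteness from the ordering and orthogonality of the components is a harmless supplement to the paper's bare citation.
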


\begin{proof}
Both classes consist of representation-infinite cycle-finite algebras \cite{SkCF}.
For a tame concealed algebra, $\Gamma_A$ is the disjoint union of a preprojective component, a separating family of stable tubes, and a preinjective component \cite[\S XII]{ASS}; the first and last are acyclic, so every component carrying an oriented cycle is a stable tube.
For a tubular algebra $A$, $\Gamma_A$ has the form
\[
  \mathcal P^{0}\vee\mathcal T^{0}\vee\Big(\bigvee_{q\in\mathbb Q^{+}}\mathcal T^{q}\Big)\vee\mathcal T^{\infty}\vee\mathcal Q^{\infty},
\]
where $\mathcal P^{0}$ is a preprojective component, $\mathcal Q^{\infty}$ a preinjective component, $\mathcal T^{0}$ a family of ray tubes, $\mathcal T^{\infty}$ a family of coray tubes, and each $\mathcal T^{q}$ a family of stable tubes \cite[p.~112]{SkCF}. 
Again every component carrying an oriented cycle is a stable, ray, or coray tube.
In either case the hypothesis of Proposition~\ref{prop:cyclefinite} holds, and $\FPdim(A)=1$.
\end{proof}

For the tame grids below, which fall outside Proposition~\ref{prop:cyclefinite}, we use instead a homological bound on the spectral radius, valid for any algebra of global dimension at most two whose Tits form is weakly non-negative---automatically so for a tame algebra \cite[\S1.3]{dlP}.

\begin{lemma}\label{lem:tits}
Let $A$ be a basic algebra whose Gabriel quiver is acyclic, of global dimension at most $2$, and with weakly non-negative Tits form $q_A$. If $\mathcal S=\{S_1,\dots,S_t\}$ is a finite semibrick with $\Ext^2_A(S_i,S_j)=0$ for all $i,j$, then $\rho(\Qext{\mathcal S})\le1$.
\end{lemma}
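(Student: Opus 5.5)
The plan is to compare the $\Ext$-quiver of $\mathcal S$ with the Tits form through the homological Euler form. Since $A$ is basic with acyclic Gabriel quiver and $\operatorname{gl.dim}A\le2$, the Euler form
\[
\langle \underline{\dim}X,\underline{\dim}Y\rangle=\dim_\kk\Hom_A(X,Y)-\dim_\kk\Ext^1_A(X,Y)+\dim_\kk\Ext^2_A(X,Y)
\]
is a well-defined bilinear form on the Grothendieck group $\mathbb Z^n$. It is well defined because it is additive on short exact sequences, via the long exact sequence, which terminates at $\Ext^2$.

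First I will check that its quadratic form coincides with the Tits form $q_A$. Evaluating on simple modules gives $\langle e_i,e_j\rangle=\delta_{ij}-\dim\Ext^1_A(S_i^A,S_j^A)+\dim\Ext^2_A(S_i^A,S_j^A)$. Since the quiver is acyclic, $\dim\Ext^1(S_i^A,S_j^A)$ counts arrows and $\dim\Ext^2(S_i^A,S_j^A)$ counts minimal relations (Bongartz). Hence $q_A(x)=\langle x,x\rangle$ for all $x\in\mathbb Z^n$. I expect this identification to be the main point needing care. It is standard for triangular algebras, but I must make sure the convention for $q_A$ in \cite{dlP} is exactly this one.

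Next, I put $x=\sum_i c_i\,\underline{\dim}S_i$ with $c_i\in\mathbb Z_{\ge0}$. This vector has non-negative coordinates. By bilinearity and the hypotheses on $\mathcal S$ (namely $\Hom(S_i,S_j)=\delta_{ij}\kk$ and $\Ext^2_A(S_i,S_j)=0$), I get
\[
0\le q_A(x)=\sum_i c_i^2-\sum_{i,j}e_{ij}c_ic_j,\qquad e_{ij}=\dim_\kk\Ext^1_A(S_i,S_j).
\]
The inequality is the weak non-negativity of $q_A$. Both sides are homogeneous quadratic in $c$, so the inequality extends to $c\in\mathbb Q_{\ge0}^t$. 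By continuity it then extends to all $c\in\mathbb R_{\ge0}^t$.

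Finally I apply Perron--Frobenius to $E=\adj{\Qext{\mathcal S}}=(e_{ij})$. There is a non-zero vector $v\in\mathbb R_{\ge0}^t$ with $Ev=\rho v$, where $\rho=\rho(\Qext{\mathcal S})$; this step uses $v^TEv=\rho\,v^Tv$. Substituting $c=v$ gives $0\le v^Tv-v^TEv=(1-\rho)\,v^Tv$, so $\rho\le1$. The empty semibrick is trivial.
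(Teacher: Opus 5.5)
Your proposal is correct and follows essentially the same route as the paper. Both identify the Euler form with the Tits form via Bongartz, evaluate it on $\sum_i v_i\,\underline{\dim}\,S_i$ using $\dim_\kk\Hom_A(S_i,S_j)=\delta_{ij}$ and the $\Ext^2$-vanishing, extend weak non-negativity to non-negative real vectors by homogeneity and continuity, and conclude from a non-negative Perron--Frobenius eigenvector that $(1-\rho)\,v^Tv\ge0$.
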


\begin{proof}
Since the Gabriel quiver is acyclic and $A$ has global dimension at most $2$, the Euler form $\langle\underline{\dim}~X,\underline{\dim}~Y\rangle=\sum_{k\ge0}(-1)^k\dim_\kk\Ext^k_A(X,Y)$ is a well-defined bilinear form on the Grothendieck group.
Then its associated quadratic form is the Tits form $q_A$ \cite[\S2.2]{BoQ}.
Write $M=\adj{\Qext{\mathcal S}}$, so $M_{ij}=\dim_\kk\Ext^1_A(S_i,S_j)$.
As $\mathcal S$ is a semibrick, $\dim_\kk\Hom_A(S_i,S_j)=\delta_{ij}$, and $\Ext^2_A(S_i,S_j)=0$ by hypothesis, we have  
\[ \langle\underline{\dim}~S_i,\underline{\dim}~S_j\rangle=\delta_{ij}-M_{ij}. \]
Let $b(x,y)=\langle x,y\rangle+\langle y,x\rangle$ be the symmetric bilinear form of $q_A$. 
Then its Gram matrix on the $\underline{\dim}~S_i$ is $2I-(M+{}^tM)$.
Since $M$ has non-negative entries, the Perron--Frobenius theorem provides there is a non-negative vector $v\neq0$ such that $Mv=\rho(M)\,v$.
Put $w=\sum_{i=1}^t v_i\,\underline{\dim}~S_i$, a vector with non-negative real coordinates. 
Then
\[
   2\bigl(1-\rho(M)\bigr)\,{}^tvv
   = {}^tv\bigl(2I-(M+{}^tM)\bigr)v
   = b(w,w)=2\,q_A(w).
\]
A weakly non-negative form is, by homogeneity and continuity, non-negative on all vectors with non-negative coordinates, so $q_A(w)\ge0$; as ${}^tvv>0$ we conclude that $\rho(M)\le1$.
\end{proof}

\begin{proposition}\label{prop:hom}
Let $A$ be a basic algebra whose Gabriel quiver is acyclic, of global dimension at most $2$, with weakly non-negative Tits form. If $\Ext^2_A(S_i,S_j)=0$ for every finite semibrick $\{S_1,\dots,S_t\}$ whose $\Ext$-quiver is strongly connected, then $\FPdim(A)\le1$.
\end{proposition}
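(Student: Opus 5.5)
Let $\mathcal S$ be an arbitrary finite semibrick of $A$; we must show $\rho(\Qext{\mathcal S})\le1$. As in the proof of Proposition~\ref{prop:cyclefinite}, we order the strongly connected components of $\Qext{\mathcal S}$ compatibly with the arrows between them. This makes $\adj{\Qext{\mathcal S}}$ block upper triangular, with diagonal blocks the adjacency matrices of the full subquivers on these components. The spectral radius of a block triangular matrix is the maximum of the spectral radii of its diagonal blocks. It therefore suffices to bound $\rho$ of each strongly connected component $\mathcal C\subseteq\Qext{\mathcal S}$.

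Fix such a component with vertex set $\mathcal S_{\mathcal C}=\{S_{i_1},\dots,S_{i_t}\}$. A subset of a semibrick is again a semibrick, so $\mathcal S_{\mathcal C}$ is a finite semibrick. Its $\Ext$-quiver is exactly the full subquiver of $\Qext{\mathcal S}$ on these vertices, because arrows are defined pairwise by $\dim_\kk\Ext^1_A$. Hence $\Qext{\mathcal S_{\mathcal C}}=\mathcal C$, which is strongly connected. A single vertex without a loop is strongly connected in the trivial sense and contributes $\rho=0$, so it needs no hypothesis; one may also simply apply the hypothesis to it. By the hypothesis of the proposition, $\Ext^2_A(S,S')=0$ for all $S,S'\in\mathcal S_{\mathcal C}$.

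Lemma~\ref{lem:tits} now applies to $\mathcal S_{\mathcal C}$, since $A$ is basic with acyclic Gabriel quiver, has global dimension at most $2$, and has weakly non-negative Tits form. It gives $\rho(\mathcal C)\le1$. Taking the maximum over components gives $\rho(\Qext{\mathcal S})\le1$, and taking the supremum over $\mathcal S$ gives $\FPdim(A)\le1$.

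The only point requiring care is the reduction to strongly connected components. We need the block triangular form, which is standard for reducible nonnegative matrices via the Frobenius normal form. We also need that the hypothesis concerns the sub-semibrick itself and not all of $\mathcal S$. This matters because $\Ext^2$ may well be nonzero between bricks lying in different components.
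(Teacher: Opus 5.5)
Your proof is correct and follows the paper's argument essentially verbatim. You reduce via the block triangular form to the strongly connected components, observe that each component is the $\Ext$-quiver of a sub-semibrick so that the hypothesis gives vanishing of $\Ext^2_A$, and conclude with Lemma~\ref{lem:tits}; your remarks on loopless single vertices and on applying the hypothesis to the sub-semibrick rather than to all of $\mathcal S$ are accurate.
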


\begin{proof}
Let $\mathcal S$ be a finite semibrick. Ordering the strongly connected components of $\Qext{\mathcal S}$ makes $\adj{\Qext{\mathcal S}}$ block triangular, so $\rho(\Qext{\mathcal S})$ is the largest spectral radius among the diagonal blocks (Lemma~\ref{lem:rho}). Each diagonal block is the $\Ext$-quiver of a sub-semibrick whose $\Ext$-quiver is strongly connected, hence with $\Ext^2$ vanishing, and has spectral radius at most $1$ by Lemma~\ref{lem:tits}. Therefore $\rho(\Qext{\mathcal S})\le1$, and $\FPdim(A)\le1$.
\end{proof}

Proposition~\ref{prop:hom} is a homological companion to Proposition~\ref{prop:cyclefinite}: the latter bounds $\FPdim$ through the shape of the cyclic Auslander--Reiten components, the former through the vanishing of $\Ext^2$. For the grids of Theorem~\ref{thm:grid} we shall combine Lemma~\ref{lem:tube} (for their tubes) with Lemma~\ref{lem:tits} (for their coil), rather than verify the hypothesis of Proposition~\ref{prop:hom} on every component.

We turn to the wild case.
An algebra $A$ is \emph{strictly wild} if there is an exact, fully faithful $\kk$-linear functor $\mmod~\kk\langle x,y\rangle\to\mmod A$, where $\kk\langle x,y\rangle$ is the path algebra of the quiver having single vertex and two loops $x$ and $y$.
The wild hereditary algebras are strictly wild \cite[Proposition~7]{Ariki}.
The following is due to Chen, Gao, Wicks, Zhang, Zhang and Zhu \cite[Lemma~7.5, Proposition~7.6]{CGWZ}, who construct bricks with arbitrarily large self-extension spaces.
We include a short proof for completeness.

\begin{proposition}[\protect{\cite{CGWZ}}]\label{prop:wild}
Every strictly wild algebra $A$ satisfies
\[ \FPdim(A)=\infty. \]
\end{proposition}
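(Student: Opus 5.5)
The plan is to transport bricks along the given exact, fully faithful functor $F\colon\mmod\kk\langle x,y\rangle\to\mmod A$. We will produce single bricks of $A$ with arbitrarily large self-extension spaces, so that a one-vertex semibrick already has a large spectral radius. For any brick $\{X\}$, the $\Ext$-quiver $\Qext{\{X\}}$ is one vertex with $\dim_\kk\Ext^1_A(X,X)$ loops, so $\rho=\dim_\kk\Ext^1_A(X,X)$. It therefore suffices to find, for every $n$, a brick $X_n\in\mmod A$ with $\dim_\kk\Ext^1_A(X_n,X_n)\ge n$.

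\textbf{Step 1: bricks on the free-algebra side.} Put $\Lambda=\kk\langle x,y\rangle$. Take $V=\kk^d$ with $x$ acting by a nilpotent Jordan block $J$ and $y$ acting by a generic matrix, for instance the elementary matrix $E_{d1}$. The commutant of $J$ consists of polynomials in $J$. Requiring also commutation with $E_{d1}$ forces such a polynomial to be scalar, so $V$ is a brick. Alternatively, take any finite-dimensional simple $\Lambda$-module of dimension $d$, which exists since $\Lambda$ surjects onto $M_d(\kk)$ via two generators of the matrix algebra; this is a brick by Schur.

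\textbf{Step 2: the self-extensions of $V$.} Since $\Lambda$ is hereditary with two loops, the Euler form gives
\[\dim\End_\Lambda(V)-\dim\Ext^1_\Lambda(V,V)=d^2-2d^2=-d^2.\]
Hence $\dim\Ext^1_\Lambda(V,V)=d^2+1$.

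\textbf{Step 3: transfer along $F$.} Full faithfulness gives $\End_A(FV)\simeq\End_\Lambda(V)\simeq\kk$, so $FV$ is a brick. Exactness together with full faithfulness makes the induced map
\[\Ext^1_\Lambda(V,V)\to\Ext^1_A(FV,FV)\]
on Yoneda extensions injective. Indeed, if the image of a short exact sequence splits in $\mmod A$, the splitting morphism lifts by fullness, and faithfulness shows that the lift is a section. Hence
\[\dim\Ext^1_A(FV,FV)\ge d^2+1.\]
Letting $d\to\infty$ gives $\FPdim(A)=\infty$.

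\textbf{Main obstacle.} The delicate step is the injectivity on $\Ext^1$ in Step 3. One must check that the retraction $s\colon FE\to FV$ of $F(0\to V\to E\to V\to0)$ comes from a morphism $E\to V$ of $\Lambda$-modules, which is exactly fullness. One must also check that the composite identity relation $s\circ F(\iota)=\mathrm{id}$ pulls back, which is exactly faithfulness. This argument uses only that $F$ is exact and fully faithful, not that it preserves projectives, so it works for an arbitrary strictly wild $A$.
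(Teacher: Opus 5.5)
Your proof is correct and is essentially the paper's argument. The paper also uses the simple module $S_d$ afforded by two generators of $M_d(\kk)$ (your second option in Step~1), computes $\dim_\kk\Ext^1_R(S_d,S_d)=1+d^2$ from the resolution $0\to R^{2d}\to R^d\to S_d\to0$ (equivalent to your Euler-form count), and transfers along $F$ by the same argument lifting the splitting.

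One small caution: your Jordan-block alternative needs $J$ with ones on the superdiagonal. With the lower-shift convention, $E_{d1}=J^{d-1}$, so that module is not a brick.
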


\begin{proof}
We set $R=\kk\langle x,y\rangle$.
It is hereditary, and the standard resolution of an $R$-module $M$ of $\kk$-dimension $d$ reads
\[ 0\longrightarrow R^{2d}\longrightarrow R^d\longrightarrow M\longrightarrow0. \]
By applying $\Hom_R(-,N)$ and using $\Hom_R(R,N)\simeq N$, we obtain
\begin{align*}
   \dim_\kk\Hom_R(M,N)-\dim_\kk\Ext^1_R(M,N)
      &=d\dim_\kk N-2d\dim_\kk N\\
      &=-d\dim_\kk N.
\end{align*}
For each $d\ge1$ pick a pair of $d\times d$ matrices generating $M_d(\kk)$.
The associated $R$-module $S_d$ has $\dim_\kk S_d=d$, and $\End_R(S_d)=\kk$ by Burnside's theorem, whence $S_d$ is a brick.
With $M=N=S_d$ the displayed identity gives
\[
   \dim_\kk\Ext^1_R(S_d,S_d)=\dim_\kk\End_R(S_d)+d^2=1+d^2.
\]
Let $F\colon\mmod R\to\mmod A$ be exact and fully faithful.
Fullness and faithfulness give $\End_A(FS_d)\simeq\End_R(S_d)=\kk$, and $FS_d$ is again a brick.
If a non-split sequence $0\to S_d\to E\to S_d\to0$ had split image $0\to FS_d\to FE\to FS_d\to0$, fullness would lift the splitting to a morphism $S_d\to E$ and faithfulness would make it split the original sequence, a contradiction.
Hence $F$ induces an injection $\Ext^1_R(S_d,S_d)\hookrightarrow \Ext^1_A(FS_d,FS_d)$, and $\dim_\kk\Ext^1_A(FS_d,FS_d)\ge1+d^2$.
Therefore $\{FS_d\}$ is a semibrick whose $\Ext$-quiver is a single vertex carrying at least $1+d^2$ loops, of spectral radius at least $1+d^2$.
Letting $d\to\infty$ gives $\FPdim(A)=\infty$.
\end{proof}

The wild case of the grids rests on the following refinement of Drozd's tame--wild dichotomy for strongly simply connected algebras, which brings Proposition~\ref{prop:wild} to bear.

\begin{theorem}[\protect{\cite[Corollary~3]{BPS}}]\label{thm:bps}
Every wild strongly simply connected algebra is strictly wild.
\end{theorem}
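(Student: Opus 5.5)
This is a cited result (\cite[Corollary~3]{BPS}), so the plan is to reconstruct the argument of Brüstle, de la Peña and Skowroński rather than derive it from what precedes it here. The route goes through the characterization of tameness for strongly simply connected algebras by minimal wild subcategories. For a strongly simply connected algebra $A$, the main theorem of \cite{BPS} says that $A$ is tame if and only if $A$ contains no convex subcategory which is \emph{hypercritical}, and none which is a \emph{pg-critical} algebra of a suitable exceptional kind. Equivalently, $A$ is tame if and only if the Tits form $q_A$ is weakly non-negative. So if $A$ is wild, the first step is to produce a convex subcategory $C$ of $A$ belonging to one of these explicit finite lists of minimal wild algebras.

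The second step is to verify that each algebra $C$ on these lists is strictly wild. For hypercritical algebras this goes as follows. Such a $C$ is a concealed algebra of minimal wild hereditary type, i.e.\ tilted from a wild hereditary algebra $H$ via a preprojective tilting module. The tilting functor $\Hom_H(T,-)$ restricts to an exact equivalence between the torsion class $\mathcal T(T)$, which contains all regular $H$-modules, and a full subcategory of $\mmod C$ closed under extensions. Wild hereditary algebras are strictly wild \cite{Ariki}, and the embedding $\mmod\kk\langle x,y\rangle\to\mmod H$ can be chosen to land in the regular modules, for instance by taking images inside a suitable perpendicular category. Composing then gives an exact fully faithful functor into $\mmod C$.

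For the remaining pg-critical-type minimal wild algebras, which are not concealed, I would construct the embedding by hand from the explicit bound quiver. The idea is to exhibit a pair of modules $U,V$ with $\End=\kk$, $\Hom(U,V)=\Hom(V,U)=0$ and $\dim\Ext^1(U,V)\ge 2$ (or a single brick with two independent self-extensions). The category filtered by these then receives $\mmod$ of the generalized Kronecker quiver, and hence of $\kk\langle x,y\rangle$, exactly and fully faithfully.

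The third step is to lift from $C$ to $A$. Since $C$ is convex in $A$, it is a factor algebra $C=A/AeA$ for an idempotent $e$, and restriction of scalars $\mmod C\to\mmod A$ is exact and fully faithful. Composing gives the strict wildness of $A$.

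The main obstacle I expect is the second step for the non-concealed critical algebras. There, a uniform argument is lacking, and one must check a finite but nontrivial list case by case. I would first try to reduce each such algebra via APR-tilts or one-point extensions to a concealed wild algebra, so that it falls under the hypercritical argument.
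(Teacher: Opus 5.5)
\textbf{Verdict.} There is a genuine gap: your Step~1 misidentifies the obstruction list, and the second half of Step~2 inherits the error. (For comparison, the paper gives no proof of its own and only cites the result.)

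\textbf{The misidentified list.} Pg-critical algebras are \emph{tame}; they are the tame minimal algebras that fail to be of polynomial growth. The list ``hypercritical or pg-critical convex subcategory'' is, as far as I recall, Skowro\'nski's obstruction list for \emph{polynomial growth} of strongly simply connected algebras, not for tameness. So no pg-critical algebra can be strictly wild: a strictly wild algebra is wild, and tame and wild are mutually exclusive. The case-by-case branch you single out as the main obstacle therefore cannot succeed.

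\textbf{What is actually needed.} The input you need from Br\"ustle--de la Pe\~na--Skowro\'nski is, to my recollection, the following form of their main theorem: a strongly simply connected algebra is tame if and only if $q_A$ is weakly non-negative, if and only if it contains no hypercritical convex subcategory. Only the last clause is usable here. A positive vector with $q_A(v)<0$ does not by itself produce any exact fully faithful functor from $\mmod\kk\langle x,y\rangle$. With this corrected input, your argument reduces to the hypercritical case of Step~2 plus Step~3, and both are sound.

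\textbf{The hypercritical case needs less than you use.} You do not need perpendicular categories to land in the regular modules. An exact fully faithful $F\colon\mmod\kk\langle x,y\rangle\to\mmod H$ injects $\Ext^1$, and every simple $\kk\langle x,y\rangle$-module has non-zero self-extensions. Preprojective and preinjective indecomposables over $H$ have none, and the regular $H$-modules are closed under extensions. Hence $F$ lands in the regular modules. These lie in $\mathcal T(T)=\{X\mid\Ext^1_H(T,X)=0\}$ for preprojective $T$, and there $\Hom_H(T,-)$ is exact and fully faithful.

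\textbf{The fallback criterion is also wrong as stated.} Let $U,V$ be bricks with $\Hom_A(U,V)=\Hom_A(V,U)=0$, and put $m=\dim_\kk\Ext^1_A(V,U)$. The modules that are extensions of some $V^b$ by some $U^a$ do form the essential image of an exact fully faithful functor from representations of the $m$-Kronecker quiver. However, this yields strict wildness only for $m\ge3$; for $m=2$ the source is the tame Kronecker algebra. A single brick with a two-dimensional self-extension space does not suffice either: the simple module of the tame algebra $\kk[x,y]/(x^2,y^2)$ of Remark~\ref{rem:CC} is such a brick.
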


We shall use the following for the representation-finite grids below.

\begin{proposition}\label{prop:sc}
Let $A$ be a representation-finite strongly simply connected algebra. 
Then $\FPdim(A)=0$.
\end{proposition}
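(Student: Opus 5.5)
The plan is to reduce the statement to Theorem~\ref{thm:cc}, which gives $\FPdim(A)=0$ for every representation-directed algebra. So it suffices to show that a representation-finite strongly simply connected algebra $A$ is representation-directed.

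First, since $A$ is strongly simply connected, its bound quiver $(Q,I)$ is connected and has trivial fundamental group. In particular $Q$ has no oriented cycles, because a strongly simply connected algebra is triangular. Second, since $A$ is representation-finite and simply connected in the sense of Bongartz--Gabriel, the Auslander--Reiten quiver $\Gamma_A$ is a simply connected translation quiver. It is also the universal cover of itself, so the standard covering theory applies. By the classical results of Bongartz and Gabriel, a representation-finite algebra whose Auslander--Reiten quiver is simply connected is \emph{standard*}, and its $\Gamma_A$ is a preprojective component containing all indecomposables. A finite preprojective component has no oriented cycles. The cleanest citation is \cite[\S IX]{ASS}: for representation-finite triangular simply connected algebras, $\Gamma_A$ has no oriented cycles.

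Third, over a representation-finite algebra $\rad^\infty(\mmod A)=0$. Hence every non-zero non-isomorphism between indecomposables is a sum of composites of irreducible maps, so a cycle in $\mmod A$ yields an oriented cycle in $\Gamma_A$. Since $\Gamma_A$ is acyclic, $A$ is representation-directed, and Theorem~\ref{thm:cc} gives $\FPdim(A)=0$.

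The main obstacle is the second step: justifying that $\Gamma_A$ is acyclic. The intended argument is that a simply connected representation-finite algebra has a simply connected AR quiver which is a finite preprojective component, and that oriented cycles would give a nontrivial covering. I would first try citing the literature directly. If that citation is unavailable, I would argue via Bongartz's criterion: a representation-finite algebra with a preprojective component containing all indecomposables is directed. Separately, I would note that for the grids $\kk A_m\otimes\kk A_n$ one can check directedness explicitly as a fallback.
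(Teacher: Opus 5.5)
Your proposal is correct and follows the same route as the paper. The paper also cites the literature (\cite[\S6]{BG}) for the fact that a representation-finite strongly simply connected algebra has an Auslander--Reiten quiver without oriented cycles, concludes that it is representation-directed, and applies Theorem~\ref{thm:cc}; your $\rad^\infty(\mmod A)=0$ argument only spells out the step from an acyclic $\Gamma_A$ to representation-directedness, which the paper leaves implicit, and the looser covering-theoretic remarks in your second step can simply be replaced by that citation.
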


\begin{proof}
Being strongly simply connected and representation-finite, $A$ has an Auslander--Reiten quiver with no oriented cycle \cite[\S6]{BG}, hence is representation-directed, and $\FPdim(A)=0$ by Theorem~\ref{thm:cc}.
\end{proof}

The representation type of the grids was classified by Leszczy\'nski.

\begin{theorem}[\protect{\cite{Lesz,Skow}}]\label{thm:lesz}
The grid $\kk A_m\otimes\kk A_n$ is representation-finite, tame, or wild according as $(m-1)(n-1)$ is at most $3$, equal to $4$, or at least $5$.
\end{theorem}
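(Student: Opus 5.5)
Since every grid $\kk A_m\otimes\kk A_n$ is a strongly simply connected incidence algebra of the poset $[m]\times[n]$, I would prove the classification by reducing it to quadratic-form computations via the known Tits-form criteria for strongly simply connected algebras.

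First I would enumerate the cases, using symmetry in $m,n$ and the fact that the case $m=1$ or $n=1$ is a linearly oriented $\mathbb A$-quiver and hence representation-finite. The condition $(m-1)(n-1)\le3$ then leaves the grids $2\times2$, $2\times3$, $2\times4$. The value $4$ is attained only by $2\times5$ and $3\times3$. Every other grid contains, as an order-convex subposet and hence as a convex subcategory, either $2\times6$ or $3\times4$. Since representation type passes to convex subcategories, it suffices to prove three things: $2\times4$ is representation-finite, $2\times5$ and $3\times3$ are tame (and representation-infinite), and $2\times6$ and $3\times4$ are wild.

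For the finite and tame cases I would use the Tits form. The grid has global dimension $2$, and its relations are exactly the commuting squares. So $q(x)=\sum_i x_i^2-\sum_{\text{arrows }i\to j}x_ix_j+\sum_{\text{squares }i\le l}x_ix_l$, where the last sum runs over commuting squares with source $i$ and sink $l$.
\begin{itemize}
\item For $2\times4$, I would check that $q$ is weakly positive and invoke Bongartz's criterion for simply connected algebras. This is a finite check, which I would do by identifying $q$ with the form of a positive unit form of Dynkin type; a reflection or tilting argument shows $2\times4$ is derived equivalent to $\mathbb E_8$-type.
\item For $2\times5$ and $3\times3$, I would invoke the Brüstle--de la Peña--Skowroński theorem that a strongly simply connected algebra is tame if and only if its Tits form is weakly non-negative. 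I would show weak non-negativity by exhibiting a non-zero radical vector and a reduction to an extended-Dynkin or tubular form. My expectation is that $3\times3$ and $2\times5$ are tubular, of types $(3,3,3)$ and $(2,3,6)$ respectively, which would also give representation-infiniteness.
\end{itemize}

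For the wild minimal cases $2\times6$ and $3\times4$, I would exhibit an explicit non-negative integer vector $x$ with $q(x)<0$, for instance a suitable dimension vector supported on the whole poset. By the Tits-form criterion this gives wildness, and Theorem~\ref{thm:bps} upgrades it to strict wildness if that is needed later.

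I expect the main obstacle to be the tame cases. Proving \emph{weak} non-negativity, rather than exhibiting a single negative vector, requires controlling the form on the entire positive cone. I would handle this by a sequence of Gabrielov--Ovsienko type transformations, or by realizing each algebra as tilted or derived equivalent to a tubular canonical algebra, since a derived equivalence preserves the Euler form.
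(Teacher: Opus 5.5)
The paper gives no argument for this statement; it quotes the trichotomy from Leszczy\'nski and Skowro\'nski. Your route is a sound self-contained alternative. The reduction is correct: an order-convex subposet with idempotent $e$ gives $eAe\simeq A/A(1-e)A$, so finiteness passes down to $2\times2$ and $2\times3$, and wildness passes up from $2\times6$ and $3\times4$. Bongartz's criterion applies to $2\times4$, because a strongly simply connected algebra satisfies the separation condition. Brüstle--de la Pe\~na--Skowro\'nski settles the tame case. For the wild case, de la Pe\~na's theorem that tame implies a weakly non-negative Tits form, together with Drozd, suffices. Compared with the citation, this runs on the same Tits-form machinery the paper uses afterwards (Lemma~\ref{lem:tits}, Theorem~\ref{thm:bps}).

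One step is wrong as written: the grids $3\times3$ and $2\times5$ are \emph{not} tubular algebras. The constant representation $M_0=P_{(1,1)}=I_{(m,n)}$ is projective--injective. In a tubular algebra the projectives lie in $\mathcal P^0\vee\mathcal T^0$ and the injectives in $\mathcal T^\infty\vee\mathcal Q^\infty$, so no such module exists there (compare Remark~\ref{rem:gridcoil}, where $M_0$ sits in a coil). The grids are only derived equivalent to tubular canonical algebras. Your type $(2,3,6)$ for $2\times5$ is right, but for $3\times3$ the type is $(2,4,4)$, not $(3,3,3)$: a canonical algebra of type $(3,3,3)$ has $8$ simple modules, while the grid has $9$.

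So representation-infiniteness does not follow the way you claim. Fix it with positive isotropic vectors. Write $x=(x_{ij})$, with the relations joining $(i,j)$ and $(i+1,j+1)$. Then $h=\left(\begin{smallmatrix}0&1&1\\1&2&1\\1&1&0\end{smallmatrix}\right)$ on $3\times3$ and $h=\left(\begin{smallmatrix}0&1&2&2&1\\1&2&2&1&0\end{smallmatrix}\right)$ on $2\times5$ both satisfy $q(h)=0$. A representation-finite strongly simply connected algebra is representation-directed and hence has weakly positive Tits form, so both grids are representation-infinite.

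The step you flag as the main obstacle is milder than you expect. Both tame forms are positive semidefinite of corank $2$, which is a finite check on a symmetric $9\times9$ or $10\times10$ matrix. Derived equivalence does transport this, but only because the tubular form is non-negative on all of $\mathbb Z^n$. A $\mathbb Z$-isometry does not preserve weak non-negativity in general, so phrase that route through semidefiniteness.

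For the wild step, the same $h$ produces the vectors you need. Adding the simple vector $e$ at $(1,4)$, respectively $(1,6)$, gives $b(h,e)=-1$, so $q(2h+e)=-1$. Explicitly, these are $\left(\begin{smallmatrix}0&2&2&1\\2&4&2&0\\2&2&0&0\end{smallmatrix}\right)$ on $3\times4$ and $\left(\begin{smallmatrix}0&2&4&4&2&1\\2&4&4&2&0&0\end{smallmatrix}\right)$ on $2\times6$.
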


By Wang's criteria  \cite[Theorem~3.4]{W},  the grid $\kk A_m\otimes\kk A_n$ is $\tau$-tilting finite if and only if $(m-1)(n-1)\le3$.
For the tame grids we shall need the vanishing of certain second extension groups; we record the homological dimensions of three modules built from the constant representation.

\begin{lemma}\label{lem:frame}
Let $A=\kk A_m\otimes\kk A_n$ and let $M_0$ be the constant representation, with $o=(1,1)$ and $z=(m,n)$. Then $R=\rad M_0$, $Q=M_0/\soc M_0$ and $N=\rad M_0/\soc(\rad M_0)$ satisfy $\mathrm{proj.dim}R\le1$, $\mathrm{inj.dim}Q\le1$ and $\mathrm{proj.dim}N\le1$.
Consequently $\Ext^2_A(R,-)=0$, $\Ext^2_A(-,Q)=0$ and $\Ext^2_A(N,-)=0$.
\end{lemma}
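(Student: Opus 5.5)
We work with $A=\kk A_m\otimes\kk A_n$ as the incidence algebra of the poset $P=[m]\times[n]$, and think of modules as representations of $P$ with commuting squares. The vertex $o=(1,1)$ is the unique source of the Hasse diagram and $z=(m,n)$ is the unique sink. With right modules and the orientation fixed in the paper, the constant representation $M_0$ is the indecomposable projective at $o$ and also the indecomposable injective at $z$. Its top is $S_o$ and its socle is $S_z$; if the convention is reversed we simply swap the roles of $o$ and $z$ below. So $R=\rad M_0$ is the constant representation supported on $P\setminus\{o\}$, $Q=M_0/\soc M_0$ is constant on $P\setminus\{z\}$, and $N$ is constant on $P\setminus\{o,z\}$. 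Each final ``Consequently'' clause follows at once from the corresponding dimension bound, since $\mathrm{proj.dim}\le1$ kills $\Ext^2(X,-)$ and $\mathrm{inj.dim}\le1$ kills $\Ext^2(-,X)$. So everything reduces to exhibiting short resolutions.

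For $R$, we use the indecomposable projectives $P_{(1,2)}$ and $P_{(2,1)}$, which are the constant representations on the up-sets $\{(i,j):j\ge2\}$ and $\{(i,j):i\ge2\}$. Their sum surjects onto $R$, because $P\setminus\{o\}$ is the union of these two up-sets. The kernel is the constant representation on their intersection $\{(i,j): i,j\ge2\}$, which is the up-set of $(2,2)$, namely $P_{(2,2)}$. This gives
\[ 0\to P_{(2,2)}\to P_{(1,2)}\oplus P_{(2,1)}\to R\to0, \]
so $\mathrm{proj.dim}R\le1$. To justify exactness we check dimensions vertexwise. In the degenerate cases $m=1$ or $n=1$ the missing terms are zero and the same conclusion holds.

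For $Q$ we argue dually. The indecomposable injectives $I_{(m-1,n)}$ and $I_{(m,n-1)}$ are constant on the corresponding down-sets, and their union is $P\setminus\{z\}$. Their intersection is the down-set of $(m-1,n-1)$, which is $I_{(m-1,n-1)}$. This gives
\[ 0\to Q\to I_{(m-1,n)}\oplus I_{(m,n-1)}\to I_{(m-1,n-1)}\to0, \]
so $\mathrm{inj.dim}Q\le1$. Alternatively, we could apply $D$ and the symmetry of $P$ under $(i,j)\mapsto(m+1-i,n+1-j)$ to reduce to the case of $R$.

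The case of $N$ is the main obstacle, because removing the sink $z$ breaks the up-set description. The plan is to combine the two presentations. We have $N=R/S_z$, and from the resolution of $R$ we get a projective cover $P_{(1,2)}\oplus P_{(2,1)}\to N$. Its kernel $K$ has dimension $2$ at $z$, while away from $z$ it is the constant representation on $\{(i,j):i,j\ge2\}$. We will show that $K\simeq P_{(2,2)}\oplus S_z$ is false in general and instead compute $K$ directly. On the up-set of $(2,2)$, $K$ is constant except at $z$, where it is $\kk^2$. It contains the image of $P_{(2,2)}$, embedded diagonally, together with one extra dimension at $z$ coming from $(\kk\oplus\kk)$ modulo the diagonal, and this extra part is generated at $z$. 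We then try to verify $K\simeq P_{(2,2)}\oplus P_z$, where $P_z=S_z$ is projective because $z$ is a sink. That would be a projective module, giving $\mathrm{proj.dim}N\le1$. The point to check carefully is that the extra vector at $z$ splits off, that is, it can be chosen not to lie in the image of the arrows into $z$ inside $K$. We would do this by taking the vector $(1,0)\in\kk^2$ at $z$, whose span is a submodule, and checking that it gives a direct complement to $P_{(2,2)}$. If $m=2$ or $n=2$, the up-set of $(2,2)$ degenerates and we redo the same check with fewer terms.
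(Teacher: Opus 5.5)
Your proof is correct, but it takes a different route from the paper's. The paper writes down no resolutions. It uses that the grid has global dimension $2$. Then $R=\Omega S_{(o)}$ (as $M_0=P_{(o)}$ is the projective cover of $S_{(o)}$) automatically has $\mathrm{proj.dim}\,R\le1$, and $Q$ is handled dually via $M_0=I_{(z)}$. For $N$ it applies $\Ext^*_A(-,Y)$ to $0\to S_{(z)}\to R\to N\to0$: since $\Ext^1_A(S_{(z)},Y)=0=\Ext^2_A(R,Y)$, it gets $\Ext^2_A(N,Y)=0$, and global dimension $2$ then gives $\mathrm{proj.dim}\,N\le1$.

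You instead construct explicit length-one projective resolutions of $R$ and $N$ and an injective coresolution of $Q$. This is more computational, but it is self-contained: it never uses the global dimension of the grid, which the paper only asserts. It also identifies the syzygies concretely, namely $\Omega R\simeq P_{(2,2)}$ and $\Omega N\simeq P_{(2,2)}\oplus P_{(z)}$. The paper's route is shorter and isolates exactly what matters: a syzygy of a module, and the simple at the sink being projective.

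Your paragraph on $N$ needs two fixes.
\begin{itemize}
\item The sentence asserting that $K\simeq P_{(2,2)}\oplus S_{(z)}$ ``is false in general'' is wrong. It contradicts what you then set out to prove, since $P_{(z)}=S_{(z)}$, so delete it.
\item The splitting you hedge over (``we then try to verify'') is automatic. Write $F=P_{(1,2)}\oplus P_{(2,1)}$ and $\pi\colon F\to R$. Then $K=\pi^{-1}(\ker(R\to N))$ contains $\ker\pi=P_{(2,2)}$, and $K/P_{(2,2)}\simeq\ker(R\to N)=\soc R=S_{(z)}$. This is projective, so $0\to P_{(2,2)}\to K\to S_{(z)}\to0$ splits.
\end{itemize}
This uses exactly the input the paper uses, the sink simple being projective, so the two treatments of $N$ are closer than they look. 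Your vertexwise check with the vector $(1,0)$ at $z$ also works: $z$ is a sink, so its span is a submodule, and $(1,0)$ lies off the line that is the image of $P_{(2,2)}$ at $z$.
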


\begin{proof}
For $(a,b)\in [m]\times [n]$, we denote by $P_{(a,b)}$, $I_{(a,b)}$ and $S_{(a,b)}$ the indecomposable projective, injective, simple module associated with $(a,b)$, respectively.
Here $M_0=P_{(o)}=I_{(z)}$ is projective--injective, $S_{(z)}=P_{(z)}$ is simple projective, and $S_{(o)}=I_{(o)}$ is simple injective. 
The exact sequence $0\to R\to M_0\to S_{(o)}\to0$ exhibits $R$ as the first syzygy of $S_{(o)}$, so $\mathrm{proj.dim}R=\mathrm{proj.dim}S_{(o)}-1\le1$ as $A$ has global dimension $2$. Dually $0\to S_{(z)}\to M_0\to Q\to0$ exhibits $Q$ as the first cosyzygy of $S_{(z)}$, so $\mathrm{inj.dim}Q\le1$. 
Finally $\soc R=S_{(z)}$ and $R/\soc R=N$, so $0\to S_{(z)}\to R\to N\to0$ has $S_{(z)}=P_{(z)}$ projective; applying $\Ext^*_A(-,Y)$ gives the exact piece $\Ext^1_A(S_{(z)},Y)\to\Ext^2_A(N,Y)\to\Ext^2_A(R,Y)$ with both ends zero, whence $\Ext^2_A(N,-)=0$.
\end{proof}

\begin{theorem}\label{thm:grid}
For all $m,n\ge1$,
\[
   \FPdim(\kk A_m\otimes\kk A_n)=
   \begin{cases}
     0      & (m-1)(n-1)\le3,\\
     1      & (m-1)(n-1)=4,\\
     \infty & (m-1)(n-1)\ge5.
   \end{cases}
\]
\end{theorem}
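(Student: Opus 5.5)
We split according to Theorem~\ref{thm:lesz}. The grid $\kk A_m\otimes\kk A_n$ is strongly simply connected, as recorded before Lemma~\ref{lem:cyccomp}.

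\emph{Finite and wild cases.} If $(m-1)(n-1)\le3$ the grid is representation-finite, and Proposition~\ref{prop:sc} gives $\FPdim=0$. If $(m-1)(n-1)\ge5$ it is wild, hence strictly wild by Theorem~\ref{thm:bps}, and Proposition~\ref{prop:wild} gives $\FPdim=\infty$.

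\emph{Tame case, lower bound.} Here $\{m,n\}=\{2,5\}$ or $m=n=3$. For $\kk A_2\otimes\kk A_5$ I would use Theorem~\ref{thm:local}-type reasoning in reverse: it is not local, so instead I exhibit a tame concealed (or tame hereditary) factor algebra. For example, killing suitable vertices or commutativity relations should yield a factor algebra Morita equivalent to a path algebra of Euclidean type $\tilde{\mathbb D}$ or $\tilde{\mathbb E}$, or to a tame concealed algebra. The lower bound $\FPdim\ge1$ then follows from Lemma~\ref{lem:factor} together with Theorem~\ref{thm:cgwz} or Theorem~\ref{thm:tubular}. Alternatively, since the grid is representation-infinite and tame, I would take a homogeneous tube with quasi-simple brick $X$, $\tau X\simeq X$. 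Then $\{X\}$ has a loop, exactly as at the end of the proof of Proposition~\ref{prop:cyclefinite}.

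\emph{Tame case, upper bound.} This is the main obstacle. The plan is to follow the remark after Proposition~\ref{prop:hom}. First, describe the AR quiver of the tame grids; these are known to be, up to duality, algebras whose representation-infinite part consists of a separating family of stable tubes together with one coil-like component. The coil component is the one containing the modules $R$, $Q$, $N$ built from the projective--injective $M_0$. Next, given a finite semibrick $\mathcal S$, reduce via the block-triangular argument to a strongly connected component $\mathcal C$ of $\Qext{\mathcal S}$ with a cycle. Then argue as in Lemma~\ref{lem:cyccomp} that $\mathcal C$ lies in a single AR component carrying an oriented cycle; cycle-finiteness must be replaced here by generalized standardness of the relevant components. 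If that component is a standard tube, Lemma~\ref{lem:tube} gives $\rho\le1$. If it is the coil, I would check that the only bricks in it with self-extensions or lying on $\Ext$-cycles are among $M_0$-derived modules such as $R$, $N$, $Q$, or modules whose $\Ext^2$ is controlled by Lemma~\ref{lem:frame}. Hence $\Ext^2_A(S_i,S_j)=0$ on $\mathcal C$, and Lemma~\ref{lem:tits} applies: the grid has acyclic quiver, global dimension $2$, and weakly non-negative Tits form since it is tame.

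The delicate point is the case analysis verifying $\Ext^2$-vanishing within the coil. $\Ext^2_A(X,Y)\neq0$ in a global-dimension-$2$ incidence algebra essentially requires $X$ to have the simple injective $S_{(o)}$ in its top region and $Y$ to have the simple projective $S_{(z)}$ in its socle, via $M_0$. I would first try to show that any brick on an $\Ext$-cycle either avoids $o$ in its support (so it is a module over a proper convex, representation-finite or concealed subcategory), or is one of $R$, $N$, $Q$, $M_0$. Lemma~\ref{lem:frame} then kills the remaining $\Ext^2$ terms.
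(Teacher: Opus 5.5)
Your finite and wild cases coincide with the paper's. So does your homogeneous-tube lower bound. Your upper-bound outline is exactly the strategy the paper announces after Proposition~\ref{prop:hom}: block-triangular reduction, then localising a strongly connected piece of $\Qext{\mathcal S}$ in one cyclic Auslander--Reiten component, then Lemma~\ref{lem:tube} on tubes and Lemma~\ref{lem:tits} on the coil of $M_0$. But two steps fail as you set them up.

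\textbf{Localisation.} You propose to run Lemma~\ref{lem:cyccomp} with cycle-finiteness replaced by generalized standardness. That cannot work. In Lemma~\ref{lem:cyccomp}, cycle-finiteness is used precisely to get $f_k\colon T_{i_{k+1}}\to\tau T_{i_k}$ outside $\rad^\infty(\mmod A)$, and that is what places the two modules in the same component. Generalized standardness only controls $\rad^\infty$ between modules \emph{already} in one component. Every map between different components lies in $\rad^\infty$ anyway, so standardness cannot stop a strongly connected piece from straddling several components. The missing input is that the tame grids are cycle-finite. The paper gets this from de la Pe\~na's result for strongly simply connected algebras with weakly non-negative Tits form. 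The same input gives polynomial growth, and hence Skowro\'nski's description of the cyclic components as stable, ray or coray tubes or the coil of $M_0$, which you assume without source. It is also what the end of the proof of Proposition~\ref{prop:cyclefinite} relies on, and you copy that argument for your lower bound.

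\textbf{The coil step (more serious).} Your heuristic that $\Ext^2_A(X,Y)\neq0$ essentially needs $S_{(o)}$ in the top of $X$ and $S_{(z)}$ in the socle of $Y$ is false. In fact $\Ext^2_A(S_p,S_q)\neq0$ whenever $p,q$ are opposite corners of \emph{any} unit square. For example, $\Ext^2_A(S_{(1,2)},S_{(2,3)})\simeq\kk$ in $\kk A_3\otimes\kk A_3$, via $0\to P_{(2,3)}\to P_{(1,3)}\oplus P_{(2,2)}\to P_{(1,2)}\to S_{(1,2)}\to0$. So ``avoids $o$ in its support'' gives no vanishing of $\Ext^2$: modules over a proper convex subcategory can still have projective dimension $2$.

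This is exactly what happens in the coil. By Remark~\ref{rem:gridext}, besides $M_0$, the bricks of the coil on an oriented $\Ext$-cycle are:
for $\kk A_3\otimes\kk A_3$, the $\tau$-orbits $\{R,S_{(2,2)},Q\}$ and $\{N,\tau N,\tau^2N\}$;
for $\kk A_2\otimes\kk A_5$, two orbits of length five.
Here $S_{(2,2)}$, which avoids $o$, and $\tau N$ both have projective dimension $2$. Lemma~\ref{lem:frame} therefore says nothing about $\Ext^2_A(S_{(2,2)},-)$ or $\Ext^2_A(\tau N,-)$.

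Your plan lacks two things. First, it lacks this explicit finite list, so that Lemma~\ref{lem:tits} is applied to a known set of bricks. Second, it lacks a direct check of the pairs Lemma~\ref{lem:frame} leaves open. The paper does this check as follows. It uses $0\to P_{(3,3)}\to P_{(2,3)}\oplus P_{(3,2)}\to P_{(2,2)}\to S_{(2,2)}\to0$ to get $\Ext^2_A(S_{(2,2)},Y)=\operatorname{coker}(Y_{(2,3)}\oplus Y_{(3,2)}\to Y_{(3,3)})=0$ for each such brick $Y$. It then uses the self-duality of the grid for $\Ext^2_A(\tau N,S_{(2,2)})$. Without this verification, your argument cannot invoke Lemma~\ref{lem:tits} on the coil, and that is where the real content of the tame case lies.
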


\begin{proof}
First, we note that the grid $\kk A_m\otimes\kk A_n$ is strongly simply connected, with $\FPdim(\kk A_m)=\FPdim(\kk A_n)=0$.

If $(m-1)(n-1)\le3$ then $\kk A_m\otimes\kk A_n$ is representation-finite by Theorem~\ref{thm:lesz}, and $\FPdim(\kk A_m\otimes\kk A_n)=0$ by Proposition~\ref{prop:sc}.

If $(m-1)(n-1)\ge5$ then $\kk A_m\otimes\kk A_n$ is wild by Theorem~\ref{thm:lesz}.
Being strongly simply connected, it is strictly wild by Theorem~\ref{thm:bps}, and $\FPdim=\infty$ by Proposition~\ref{prop:wild}.

If $(m-1)(n-1)=4$ then, up to transposition, $\kk A_m\otimes\kk A_n$ is
$\kk A_3\otimes\kk A_3$ or $\kk A_2\otimes\kk A_5$, both tame by Theorem~\ref{thm:lesz}.
Each is strongly simply connected with weakly non-negative Tits form, hence
cycle-finite by \cite[(3.3)]{dlP} and, in particular, of polynomial growth.

We bound $\FPdim(A)$ homologically. By Lemma~\ref{lem:cyccomp} the vertices of any strongly connected piece of an $\Ext$-quiver of $A$ carrying a cycle lie in a single component of $\Gamma_A$.  
By the structure of $\Gamma_A$ for a strongly simply connected algebra of polynomial growth \cite[\S3]{SkCF}, such a component is a
stable, ray or coray tube, or the coil $\mathcal C$ of the constant representation $M_0=P_{(1,1)}=I_{(m,n)}$ (Remark~\ref{rem:gridcoil}). 
A semibrick lying in a tube has spectral radius at most $1$ by Lemma~\ref{lem:tube}. 
For a semibrick lying in $\mathcal C$ we invoke Lemma~\ref{lem:tits}: the grid has global dimension $2$ and, being tame, weakly non-negative Tits form, and the bricks of $\mathcal C$ on an oriented $\Ext$-cycle---a finite set, namely $M_0$ together with the frame $R=\rad M_0$, $Q=M_0/\soc M_0$, $N=\rad M_0/\soc(\rad M_0)$ and finitely many $\tau$-translates---satisfy $\Ext^2_A(S,S')=0$ for all pairs.
Hence Lemma~\ref{lem:tits} applies, and every strongly connected $\Ext$-quiver of a semibrick of $A$ has spectral radius at most $1$. 
As in Proposition~\ref{prop:cyclefinite} this gives $\FPdim(A)\le1$, with the lower bound $1$ furnished by a homogeneous tube. 
Therefore $\FPdim(\kk A_m\otimes\kk A_n)=1$.
\end{proof}

\begin{remark}\label{rem:gridext}
The bricks of the coil $\mathcal C$ lying on an oriented $\Ext$-cycle are $M_0$ together with two $\tau$-orbits: for $\kk A_3\otimes\kk A_3$ the orbits $\{R,S_{(2,2)},Q\}$ and $\{N,\tau N,\tau^2N\}$ (seven bricks), and for $\kk A_2\otimes\kk A_5$ two orbits of length five (eleven bricks). 
By Lemma~\ref{lem:frame} one has $\Ext^2_A(S,S')=0$ unless $\mathrm{proj.dim}S=2$ and $\mathrm{inj.dim}S'=2$, which among these bricks leaves only finitely many pairs. 
For $\kk A_3\otimes\kk A_3$ the verification is also structural.
The only sources with $\mathrm{proj.dim}=2$ are $S_{(2,2)}$ and $\tau N$, and the minimal projective resolution $0\to P_{(3,3)}\to P_{(2,3)}\oplus P_{(3,2)}\to P_{(2,2)}\to S_{(2,2)}\to0$ gives $\Ext^2_A(S_{(2,2)},Y)=\operatorname{coker}\bigl(Y_{(2,3)}\oplus Y_{(3,2)}\to Y_{(3,3)}\bigr)=0$ for every cyclic brick $Y$ (either $Y_{(3,3)}=0$, or $Y$ is indecomposable with $Y_{(3,3)}\neq0$, when the map is onto), while the self-duality of the grid disposes of $\Ext^2_A(\tau N,S_{(2,2)})$. 
Between simple modules $\Ext^2_A(S_{(p)},S_{(q)})\neq0$ exactly when $p,q$ are opposite corners of a unit square \cite[\S1.1]{BoQ},\cite{Cibils}, and these obstructing modules lie outside $\mathcal C$.
\end{remark}

\begin{remark}\label{rem:gridcoil}
The two tame grids $\kk A_3\otimes\kk A_3$ and $\kk A_2\otimes\kk A_5$ fall outside the scope of Proposition~\ref{prop:cyclefinite}, which is why the bound $\FPdim=1$ above is obtained homologically rather than from the shape of the cyclic components alone. 
Indeed, their Tits form is weakly non-negative of isotropic corank $2$. 
Each grid carries a sincere projective--injective module, the constant representation $M_0=P_{(1,1)}=I_{(m,n)}$. Were $M_0$ directing, its support would be a tilted algebra \cite{Ringel}, whose Tits form has isotropic corank at most $1$, contradicting corank $2$. 
Hence $M_0$ is non-directing and lies on an oriented cycle, so the component $\mathcal C$ of $\Gamma_A$ containing it is non-semiregular and cyclic---a proper coil, not a tube. 
This is the cyclic component that escapes the tube hypothesis of Proposition~\ref{prop:cyclefinite}.
\end{remark}

\section{Other operations}

\subsection{Triangular matrix algebras}\label{subsec:triangular}
For a triangular matrix algebra $\Lambda=\left(\begin{smallmatrix}A&M\\0&B
\end{smallmatrix}\right)$ with $M$ an $A$-$B$-bimodule, both $A$ and $B$ are
factor algebras of $\Lambda$, so Lemma~\ref{lem:factor} gives the elementary lower bound
\[ \FPdim(\Lambda)\ge\max\{\FPdim(A),\FPdim(B)\}. \]
We do not know an upper bound for $\FPdim(\Lambda)$ in terms of $A$, $B$ and $M$.

\subsection{Skew group algebras}
We turn to skew group algebras.
Let $G$ be a finite group acting on $A$ by automorphisms with $\operatorname{char}\kk\nmid|G|$, and let $A\ast G$ be the associated skew group algebra.

For a trivial action $A\ast G=A\otimes\kk G$ is an ordinary tensor product, and equality holds.

\begin{proposition}\label{prop:skewtrivial}
If $G$ acts trivially on $A$ and $\operatorname{char}\kk\nmid|G|$, then we have
\[ \FPdim(A\ast G)=\FPdim(A). \]
\end{proposition}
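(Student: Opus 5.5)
For a trivial action the multiplication in $A\ast G$ is $(a\otimes g)(b\otimes h)=ab\otimes gh$, so $A\ast G\simeq A\otimes\kk G$ as algebras. The plan is to decompose $\kk G$ and apply Corollary~\ref{cor:localproduct}.

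Since $\kk$ is algebraically closed and $\operatorname{char}\kk\nmid|G|$, Maschke's theorem together with Wedderburn's theorem gives
\[ \kk G\simeq\prod_{i=1}^r M_{d_i}(\kk), \]
where $d_1,\dots,d_r$ are the dimensions of the irreducible representations of $G$.

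Because $\FPdim$ is a Morita invariant, we may replace $\kk G$ by its basic algebra $\kk^r=\prod_{i=1}^r\kk$. Concretely, $A\otimes M_{d}(\kk)\simeq M_d(A)$ is Morita equivalent to $A$, and hence
\[ A\otimes\kk G\simeq\prod_{i=1}^r M_{d_i}(A) \]
is Morita equivalent to $A\otimes\kk^r\simeq\prod_{i=1}^r A$.

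For the conclusion, note that $\kk^r$ is a finite direct product of the local algebra $\kk$, and $\FPdim(\kk)=0$ by Lemma~\ref{lem:localbrick}: the simple module has no self-extensions. Corollary~\ref{cor:localproduct} then gives
\[ \FPdim(A\otimes\kk^r)=\FPdim(A)+0. \]
Alternatively, Lemma~\ref{lem:product} applied directly to $\prod_{i=1}^r A$ gives $\max_i\FPdim(A)=\FPdim(A)$.

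No step is a real obstacle. The only points to state explicitly are the identification $A\ast G\simeq A\otimes\kk G$ for a trivial action, and the fact that the Morita equivalence $M_d(A)\sim A$ is compatible with taking products.
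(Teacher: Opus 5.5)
Your proof is correct and is essentially the paper's argument: you identify $A\ast G\simeq A\otimes\kk G\simeq\prod_i M_{d_i}(A)$, use Morita invariance to replace each factor by $A$, and conclude with Lemma~\ref{lem:product}. Your primary route through Corollary~\ref{cor:localproduct} with $\FPdim(\kk)=0$ is also valid, but it is an unnecessary detour, since that corollary itself rests on Lemma~\ref{lem:product}.
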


\begin{proof}
As $\operatorname{char}\kk\nmid|G|$, the group algebra $\kk G$ is semisimple.
Thus, by using the Wedderburn--Artin theorem, we have
\[
   A\ast G=A\otimes\kk G\simeq\prod_{i=1}^r A\otimes M_{n_i}(\kk)
   \simeq\prod_{i=1}^r M_{n_i}(A).
\]
Each $M_{n_i}(A)$ is Morita equivalent to $A$, so $\FPdim(M_{n_i}(A))=\FPdim(A)$.
By Lemma~\ref{lem:product}, we obtain
\[ \FPdim(A\ast G)=\max_i\{\FPdim(M_{n_i}(A))\}=\FPdim(A). \]
Therefore, the assertion follows.
\end{proof}

We use the following transfer of $\Ext$ to a skew group algebra.

\begin{lemma}\label{lem:skewext}
Let $G$ act on a finite-dimensional algebra $A$ with $\operatorname{char}\kk\nmid|G|$, and let $M,N$ be $A\ast G$-modules.
Then $G$ acts on $\Ext^n_A(M,N)$, and for all $n\ge0$,  we have
\[ \Ext^n_{A\ast G}(M,N)\simeq\Ext^n_A(M,N)^G. \]
\end{lemma}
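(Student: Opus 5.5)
The plan is to view $A\ast G$-modules as $A$-modules with a compatible $G$-action, and then pass to $G$-invariants on a $G$-equivariant projective resolution, where $\operatorname{char}\kk\nmid|G|$ makes taking invariants exact.

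First, the action. A right $A\ast G$-module $M$ is an $A$-module together with a $G$-action satisfying $(ma)g=(mg)(a^g)$, where $a^g$ is the action of $g$ on $A$. For $f\in\Hom_A(M,N)$ we set $(g\cdot f)(m)=f(mg^{-1})g$. A direct check shows $g\cdot f$ is again $A$-linear. The invariants $\Hom_A(M,N)^G$ are then exactly the maps commuting with $G$, which are the $A\ast G$-homomorphisms. This settles $n=0$.

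Second, the resolution. Take a projective resolution $P_\bullet\to M$ over $A\ast G$. Each $P_i$ is a direct summand of a free module $(A\ast G)^r$. Restricted to $A$, the algebra $A\ast G\simeq\bigoplus_{g}A g$ is free over $A$, so $P_\bullet$ is also an $A$-projective resolution of $M$. The complex $\Hom_A(P_\bullet,N)$ carries the $G$-action above, and the differentials are $G$-equivariant because they are $A\ast G$-linear. Its cohomology is $\Ext^n_A(M,N)$, which defines the $G$-action on $\Ext$. Independence of the choice of resolution follows from the usual comparison theorem, since comparison maps between $A\ast G$-resolutions can be taken $A\ast G$-linear, hence equivariant.

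Third, taking invariants. By the case $n=0$, $\Hom_A(P_\bullet,N)^G=\Hom_{A\ast G}(P_\bullet,N)$, and the cohomology of this complex is $\Ext^n_{A\ast G}(M,N)$. Since $|G|$ is invertible in $\kk$, the functor $(-)^G$ on $\kk G$-modules is exact: it is the image of the idempotent $e=\frac1{|G|}\sum_g g$. Exactness means $(-)^G$ commutes with cohomology, so
\[ H^n\bigl(\Hom_A(P_\bullet,N)^G\bigr)\simeq H^n\bigl(\Hom_A(P_\bullet,N)\bigr)^G. \]
This gives $\Ext^n_{A\ast G}(M,N)\simeq\Ext^n_A(M,N)^G$.

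The main obstacle is bookkeeping rather than depth. One must verify that the conjugation action is well defined on $A$-homomorphisms when $G$ acts on $A$ nontrivially, using the twisted compatibility $(ma)g=(mg)a^g$. One must also confirm that restriction of an $A\ast G$-projective to $A$ is projective; this holds because $A\ast G$ is free as a left and as a right $A$-module.
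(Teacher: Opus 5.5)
Your proposal is correct and follows essentially the same route as the paper: restrict an $A\ast G$-projective resolution to $A$ (using that $A\ast G$ is free over $A$ on the basis $G$), identify $\Hom_{A\ast G}(P,N)$ with $\Hom_A(P,N)^G$, and use exactness of $(-)^G$ for $\operatorname{char}\kk\nmid|G|$ to commute invariants with cohomology. Your additional checks fill in details the paper leaves implicit: that the conjugation action preserves $A$-linearity, that the differentials are $G$-equivariant, and that the induced action on $\Ext$ does not depend on the chosen resolution.
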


\begin{proof}
Since $A\ast G$ is free as a right $A$-module on the basis $G$, the restriction functor $\mmod A\ast G\to\mmod A$ carries projectives to projectives, so a projective resolution $P_\bullet\to M$ over $A\ast G$ restricts to one over $A$.
For an $A\ast G$-module $P$, an $A$-linear map $f\colon P\to N$ is $A\ast G$-linear if and only if $f(pg)=f(p)\,g$ for all $g\in G$, which is precisely the condition that $f$ be fixed by the natural $G$-action on $\Hom_A(P,N)$; hence $\Hom_{A\ast G}(P,N)=\Hom_A(P,N)^G$.
As $\operatorname{char}\kk\nmid|G|$ the invariants functor $(-)^G$ is exact, whence
\begin{align*}
  \Ext^n_{A\ast G}(M,N) & =H^n\!\big(\Hom_A(P_\bullet,N)^G\big) \\
& =H^n\!\big(\Hom_A(P_\bullet,N)\big)^G \\
& =\Ext^n_A(M,N)^G.  \end{align*}
Therefore, the assertion follows.
\end{proof}

\begin{proposition}\label{prop:skewlower}
Let $A$ be a local finite-dimensional algebra and let $G$ act on $A$ with $\operatorname{char}\kk\nmid|G|$.
Then we have
\[ \FPdim(A\ast G)\ge\FPdim(A). \]
\end{proposition}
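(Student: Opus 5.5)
The plan is to exhibit one explicit finite semibrick of $A\ast G$, namely the set of all simple $A\ast G$-modules, and to show that its $\Ext$-quiver, which is a McKay quiver, has spectral radius exactly $\ell:=\dim_\kk(\rad A/\rad^2A)$. By Lemma~\ref{lem:localbrick} we have $\ell=\FPdim(A)$, so this gives the claimed inequality.

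\emph{Step 1: the simples.} Let $S=A/\rad A\simeq\kk$. Every automorphism of $A$ preserves $\rad A$ and fixes $1$, so $G$ acts trivially on $S$. For each irreducible $\kk G$-module $V$, the space $S_V:=S\otimes V$ becomes an $A\ast G$-module: $A$ acts through $S$ and $G$ acts on $V$. Restricted to $A$, the module $S_V$ is $S^{\dim V}$.

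I claim the modules $S_V$ are exactly the simple $A\ast G$-modules. First, $\rad A\cdot(A\ast G)$ is a nilpotent ideal. The quotient of $A\ast G$ by it is $\kk\ast G=\kk G$, which is semisimple because $\operatorname{char}\kk\nmid|G|$. So the simple $A\ast G$-modules are the simple $\kk G$-modules, inflated. Pairwise non-isomorphic simples form a semibrick by Schur's lemma, and there are finitely many of them. Alternatively, one checks this directly from Lemma~\ref{lem:skewext} in degree $0$: $\Hom_{A\ast G}(S_V,S_W)=\Hom_\kk(V,W)^G$.

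\emph{Step 2: the $\Ext$-quiver.} By Lemma~\ref{lem:skewext},
\[
\Ext^1_{A\ast G}(S_V,S_W)\simeq\Ext^1_A(S_V,S_W)^G\simeq\big(E\otimes\Hom_\kk(V,W)\big)^G=\Hom_G(V,E\otimes W),
\]
where $E:=\Ext^1_A(S,S)$ carries its induced $G$-action and $\dim E=\ell$. The one point needing care is that the isomorphism $\Ext^1_A(S^{a},S^{b})\simeq E\otimes\Hom_\kk(\kk^a,\kk^b)$ is $G$-equivariant. I would verify this using the $G$-equivariant identification $E\simeq D(\rad A/\rad^2A)$, which comes from the minimal projective presentation of $S$, or simply by naturality of $\Ext$ in both arguments. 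Thus the adjacency matrix is $M_{VW}=\dim\Hom_G(V,E\otimes W)$, the McKay matrix of $E$ (up to transposition, depending on conventions).

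\emph{Step 3: spectral radius.} Put $d_W=\dim W>0$. Since $E\otimes W\simeq\bigoplus_V V^{M_{VW}}$, taking dimensions gives $\sum_V M_{VW}\,d_V=\ell\,d_W$. So the positive vector $d=(d_V)_V$ is a left Perron eigenvector of $M$ with eigenvalue $\ell$. By the Perron--Frobenius theorem, a non-negative matrix with a strictly positive eigenvector has that eigenvalue as its spectral radius. Hence $\rho(\Qext{\{S_V\}})=\ell$, and therefore $\FPdim(A\ast G)\ge\ell=\FPdim(A)$.

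The expected obstacle is Step 2: making the $G$-equivariance of the Künneth-type splitting of $\Ext^1_A(S_V,S_W)$ precise. Everything else is formal.
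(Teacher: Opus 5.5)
Your proposal is correct and follows essentially the same route as the paper: you take the simple $A\ast G$-modules (inflated from $\kk G$) as the semibrick, identify their $\Ext$-quiver via Lemma~\ref{lem:skewext} with the McKay quiver of $\Ext^1_A(S,S)\simeq(\rad A/\rad^2A)^*$, and obtain the spectral radius from the strictly positive dimension vector. The differences are cosmetic: you verify $\rad(A\ast G)=(\rad A)(A\ast G)$ directly (a nilpotent ideal with semisimple quotient $\kk G$) instead of citing Reiten--Riedtmann, and you get $d$ as a left rather than a right eigenvector, which is equivalent since $\rho(M)=\rho({}^tM)$.
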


\begin{proof}
Let $S$ be the simple module of $A$ and $V:=\rad A/\rad^2A$. Then $V$ is a $\kk G$-module on which $G$ acts through its action on $A$, of dimension $\FPdim(A)$ by Lemma~\ref{lem:localbrick}.
By \cite[Theorem~1.4]{RR} the radical of $A\ast G$ coincides with $(\rad A)(A\ast G)$.
This yields that
\[ (A\ast G)/\rad(A\ast G)\simeq(A/\rad A)\ast G=\kk G, \]
which is semisimple since $\operatorname{char}\kk\nmid|G|$.
Let $W_1,\dots,W_r$ be its simple modules, regarded as $A\ast G$-modules.
They form a semibrick $\mathcal S$.

By Lemma~\ref{lem:skewext}, with the $W_i$ viewed over $A$,
\[ \Ext^1_{A\ast G}(W_i,W_j)\simeq\Ext^1_A(W_i,W_j)^G. \]
Since each $W_i\simeq S^{\dim_\kk W_i}$ and $\Ext^1_A(S,S)\simeq V^*$, we have
\[
   \Ext^1_A(W_i,W_j)\simeq V^*\otimes\Hom_\kk(W_i,W_j)
\]
as $\kk G$-modules.
Hence
\begin{align*}
   \dim_\kk\Ext^1_{A\ast G}(W_i,W_j)
   &=\dim_\kk\big(V^*\otimes\Hom_\kk(W_i,W_j)\big)^G\\
   &=\dim_\kk\Hom_{\kk G}(V\otimes W_i,W_j),
\end{align*}
the multiplicity $a_{ij}$ of $W_j$ in the $\kk G$-module $V\otimes W_i$.
Thus $\adj{\Qext{\mathcal S}}=(a_{ij})$ and $V\otimes W_i\simeq\bigoplus_j W_j^{\,a_{ij}}$.
Comparing dimensions gives
\[
   \sum_j a_{ij}\dim_\kk W_j=\dim_\kk(V\otimes W_i)=\dim_\kk V\cdot\dim_\kk W_i.
\]
Writing $d=(\dim_\kk W_1,\dots,\dim_\kk W_r)^{t}$ for the strictly positive vector of dimensions, this says that $\adj{\Qext{\mathcal S}}\,d=(\dim_\kk V)\,d$.
A non-negative matrix with a strictly positive eigenvector has the corresponding eigenvalue as its spectral radius.
Thus we have
\[ \rho(\Qext{\mathcal S})=\dim_\kk V=\FPdim(A). \]
Therefore, the inequality
\[ \FPdim(A\ast G)\ge\FPdim(A) \]
holds.
\end{proof}

The matrix $\adj{\Qext{\mathcal S}}=(a_{ij})$ above is the adjacency matrix of the \emph{McKay quiver} of the $G$-module $V$, since $a_{ij}$ is the multiplicity of $W_j$ in $V\otimes W_i$.
When $\rad^2A=0$, so that $A\ast G$ is the radical-square-zero algebra of its Gabriel quiver, this McKay quiver is the Gabriel quiver of $A\ast G$ itself, as exploited in Example~\ref{ex:skewfail}.

\begin{remark}\label{rem:skewmod}
The hypothesis $\operatorname{char}\kk\nmid|G|$ is essential.
For a trivial action with $\operatorname{char}\kk=p$ dividing $|G|$ the algebra $\kk G$ is not semisimple.
For $G=\mathbb Z/p\mathbb Z$ one has $\kk G\simeq\kk[t]/(t^p)$, which is local. It follows from Theorem~\ref{thm:local} that
\[ \FPdim(A\ast G)=\FPdim(A\otimes\kk[t]/(t^p))=\FPdim(A)+1>\FPdim(A). \]
\end{remark}

For a non-trivial action $A\ast G$ is no longer a tensor product.
We compute the smallest such case, where the lower bound of Proposition~\ref{prop:skewlower} is an equality.

\begin{example}\label{ex:skew}
Let $A=\kk[x]/(x^2)$ and let $G=\langle g\rangle\simeq\mathbb Z/2\mathbb Z$ act by $g\cdot x=-x$, with $\operatorname{char}\kk\neq2$.
Then $A\ast G$ has basis $\{1,x,g,xg\}$ with $g^2=1$, $x^2=0$ and $gx=-xg$.
The orthogonal idempotents $e_\pm=\tfrac12(1\pm g)$ afford two simple modules $S_\pm$, and one computes
\[ e_+xe_-=\dfrac{1}{2}(x-xg)\neq0,\quad  e_-xe_+=\dfrac{1}{2}(x+xg)\neq0,  \quad e_\pm xe_\pm=0. \]
Hence $\rad^2(A\ast G)=0$ and the Gabriel quiver is the two-cycle
\[
\xymatrix{
  S_+ \ar@/^1ex/[r] & S_- \ar@/^1ex/[l]
}
\]
Since its separated quiver is $\mathbb A_2\sqcup\mathbb A_2$, the algebra $A\ast G$ is representation-finite.
Its indecomposable modules are the two simples $S_\pm$ and the two projective-injective modules $P_\pm$ of length two, all four of which are bricks.
Now,  by the following
\begin{align*}
&\Hom(P_\pm,S_\pm)\neq0, ~~  \Hom(S_\mp,P_\pm)\neq0, \\
& \Hom(P_+,P_-)\neq0\neq\Hom(P_-,P_+),
\end{align*}
any semibrick meeting $\{P_+,P_-\}$ is a singleton $\{P_\pm\}$, of spectral radius $0$ as $P_\pm$ is projective.
The only semibrick of positive spectral radius is $\{S_+,S_-\}$, with $\Ext$-quiver $\left(\begin{smallmatrix}0&1\\1&0\end{smallmatrix}\right)$ of spectral radius $1$.
Therefore $\FPdim(A\ast G)=1=\FPdim(A)$.
Here the single loop of the Gabriel quiver of $A$ has unfolded into the two-cycle, the spectral radius being preserved.
\end{example}

However, even for local $A$ equality can fail.

\begin{example}\label{ex:skewfail}
Let $A=\kk[x_1,x_2,x_3]/\mathfrak m^2$ with $\mathfrak m=(x_1,x_2,x_3)$, and let $G=\mathbb Z/2\mathbb Z$ act by $x_i\mapsto-x_i$, with $\operatorname{char}\kk\neq2$.
The algebra $A$ is local with $\rad^2A=0$ and $\dim_\kk(\rad A/\rad^2A)=3$, and $\FPdim(A)=3$ by Lemma~\ref{lem:localbrick}.
Since $\rad^2A=0$ we have $\rad^2(A\ast G)=0$, and $A\ast G$, being basic, is the radical-square-zero algebra of its Gabriel quiver.
By the computation in Proposition~\ref{prop:skewlower} that quiver has the two simple $\kk G$-modules $\mathbf1,\epsilon$ as vertices and $\dim_\kk\Hom_{\kk G}(V\otimes W_i,W_j)$ arrows from $W_i$ to $W_j$, where $V=\rad A/\rad^2A=\epsilon^{\oplus3}$.
As $V\otimes\mathbf1=\epsilon^{\oplus3}$ and $V\otimes\epsilon =\mathbf1^{\oplus3}$, it has three arrows $\mathbf1\to\epsilon$, three arrows $\epsilon\to\mathbf1$, and no loops.
Factoring out the three arrows $\epsilon\to\mathbf1$ presents the path algebra of the three-Kronecker quiver, two vertices joined by three arrows, as a quotient of $A\ast G$.
This path algebra is wild hereditary, hence strictly wild \cite[Proposition~7]{Ariki}.
As the inflation functor along a quotient is exact and fully faithful, strict wildness passes to $A\ast G$.  Thus, we have $\FPdim(A\ast G)=\infty$ by Proposition~\ref{prop:wild}.
Therefore, $\FPdim(A)=3$ while $\FPdim(A\ast G)=\infty$.
\end{example}

Thus equality holds for trivial actions (Proposition~\ref{prop:skewtrivial}) and in the small non-trivial Example~\ref{ex:skew}, the bound $\ge$ holds for all local $A$ (Proposition~\ref{prop:skewlower}), and equality fails once $A\ast G$ is strictly wild (Example~\ref{ex:skewfail}).
In a different direction, the passage to $A\ast G$ can also break $\tau$-tilting finiteness. By \cite[Theorem~1.4]{Hiramae}, for the truncated polynomial algebra
\[ A=\kk[x_1,\dots,x_n]/(x_1^{p^\ell},\ldots, x_n^{p^\ell}) \]
and a $p'$-subgroup $G\le\mathfrak S_n$ permuting the variables, the group algebra $A\ast G\simeq\kk[(\mathbb Z/p^\ell \mathbb Z)^n\rtimes G]$ is frequently $\tau$-tilting infinite while $A$ is $\tau$-tilting finite.

Two natural questions remain.
The first is to characterize the pairs $(A,B)$ for which equality holds in Theorem~\ref{thm:superadd}.
By Theorem~\ref{thm:local} this is so whenever one factor is local, and trivially when one factor is semisimple.
The second is to describe when $\FPdim(A\otimes B)$ is finite, in relation to the classification of non-wild tensor product algebras by Leszczy\'nski and Skowro\'nski \cite{LS}.

%==========================================================

\end{document}